\newtheorem{thm}{Theorem}[section]
 \newtheorem{cor}[thm]{Corollary}
 \newtheorem{lem}[thm]{Lemma}
 \newtheorem{prop}[thm]{Proposition}
 \theoremstyle{definition}
 \newtheorem{rem}[thm]{Remark}
 \numberwithin{equation}{section}
\DeclareMathOperator{\End}{End}
\DeclareMathOperator{\Hom}{Hom}
\DeclareMathOperator{\Ker}{Ker}
\DeclareMathOperator{\im}{Im}
\begin{document}

\author{ Abyzov Adel Nailevich, Truong Cong Quynh
\\
and Tran Hoai Ngoc Nhan}
\address{Department of Algebra and Mathematical Logic, Kazan (Volga Region) Federal University, 18 Kremlyovskaya str., Kazan, 420008 Russia}
\email{aabyzov@ksu.ru, Adel.Abyzov@ksu.ru}
\address{Department of Mathematics, Danang University, 459 Ton Duc Thang, Danang city, Vietnam}
 
\email{tcquynh@dce.udn.vn; tcquynh@live.com}
\address{Department of Algebra and Mathematical Logic, Kazan (Volga Region) Federal University, 18 Kremlyovskaya str., Kazan, 420008 Russia}
\email{tranhoaingocnhan@gmail.com}
\title[C3 and D3 modules]{On  classes of  C3 and D3 modules}
\keywords{$\mathcal{A}$-C3 module,  $\mathcal{A}$-D3 module, V-module. }
\subjclass[2010]{16D40, 16D80}

\begin{abstract} The aim of this paper is to  study the notions of
 $\mathcal{A}$-C3 and  $\mathcal{A}$-D3 modules for some class $\mathcal{A}$ of right modules.  Several characterizations of these modules
are provided and used to describe some well-known classes of rings and modules. For
example,    a regular right $R$-module $F$  is a $V$-module if and only if every $F$-cyclic module $M$  is an $\mathcal{A}$-C3 module where $\mathcal{A}$ is the class of all simple submodules of $M$. Moreover,  let $R$ be a right artinian ring  and $\mathcal{A}$,   a class of  right $R$-modules with local endomorphisms, containing  all simple right $R$-modules and closed under isomorphisms.  If all right $R$-modules are $\mathcal{A}$-injective, then    $R$ is a serial artinian ring with $J^{2}(R)=0$ if and only if every $\mathcal{A}$-C3 right $R$-module is quasi-injective,  if and only if every  $\mathcal{A}$-C3  right $R$-module is C3.

\end{abstract}

\maketitle

\bigskip

\bigskip

\section{Introduction and notation.}

The study of modules with summand intersection property was motivated by the following result of Kaplansky: every free module over a commutative principal ideal ring has the summand intersection property (see \cite[Exercise 51(b)]{KP}). A module $M$ is said to have the \emph{summand intersection property}  if the intersection of any two direct summands of $M$ is a direct summand of $M$. This definition is introduced  by Wilson \cite{Wi}.
Dually, Garcia \cite{G89} consider the summand sum property. A module $M$ is said to have the \emph{summand sum property}   if the sum of any two direct summands is a direct summand of $M$. These properties have been  studied by several  authors (see  \cite{AT14, AH02, HHO05,H89, QKT},...).  Moreover, the  classes of C3-modules and D3-modules have recently studied by Yousif et al. in \cite{Amin,YAI}. Some characterizations of semisimple rings and regular rings and other  classes of rings are studied via C3-modules and D3-modules. On the other hand, several authors investigated  some properties of  generalizations of   C3-modules and D3-modules in \cite{CIYZ, IKQY}; namely, simple-direct-injective modules  and simple-direct-projective modules.  A right $R$-module $M$ is called a $C3$-module if, whenever $A$ and $B$ are submodules of $M$ with $A \subset_d M$, $B \subset_d M$ and $A\cap B=0$, then $A\oplus B\subset_d  M $.   $M$ is called \textit{simple-direct-injective} in \cite{CIYZ} if the submodules $A$ and $B$ in the
above definition are simple. Dually, $M$ is called a $D3$-module if, whenever $M_{1}$ and $M_{2}$ are
direct summands of $M$ and \ $M=M_{1}+M_{2},$ then $M_{1}\cap M_{2}$ is a
direct summand of $M$.  $M$ is called \textit{simple-direct-projective} in \cite{IKQY} if the submodules $M_1$ and $M_2$ in the above definition are maximal.

In Section 2, we introduce the notions of    $\mathcal{A}$-C3 modules and  $\mathcal{A}$-D3 modules, where $\mathcal{A}$ is a class of right modules over the ring $R$ and  closed under isomorphisms.   It is shown that if each factor module of $M$ is $\mathcal {A}$-injective, then $M$ is an   $\mathcal{A}$-D3 module if and only if $M$ satisfies D2 for the class $\mathcal {A}$,  if and only if $M$ have the summand intersection property  for the class $\mathcal {A}$  in Proposition \ref{4.2}. On the other hand, if every submodule of $M$ is  $\mathcal{A}$-projective, then $M$ is an $\mathcal{A}$-C3 module if and only if $M$ satisfies C2 for the class $\mathcal {A}$,  if and only if $M$  have the  summand sum property for the class $\mathcal {A}$  in Proposition \ref{4.7}. Some well-known properties of other modules  are obtained from these results.

In Section 3, we provide some characterizations of serial artinian rings and semisimple artinian rings.  The Theorem \ref{thm:4.2q} and Theorem \ref{thm:4.4q} are indicated that let $R$ be a right artinian ring and  $\mathcal{A}$,   a class of  right $R$-modules with local endomorphisms, containing  all simple right $R$-modules and closed under isomorphisms:

\begin{enumerate}
 \item If all right $R$-modules are $\mathcal{A}$-injective, the following conditions are equivalent for a ring $R$:
\begin{enumerate}
\item [(i)] $R$ is a serial artinian ring with $J^{2}(R)=0$.
\item [(ii)] Every $\mathcal{A}$-C3 right $R$-module is quasi-injective.
\item [(iii)] Every $\mathcal{A}$-C3  right $R$-module is $C3$.
\end{enumerate}
\item  If all right $R$-modules are $\mathcal{A}$-projective, then the following conditions are equivalent for a ring $R$:

\begin{enumerate}
\item [(i)] $R$ is a serial artinian ring with $J^{2}(R)=0$.
\item [(ii)] Every $\mathcal{A}$-D3 right $R$-module is quasi-projective.
\item [(iii)] Every $\mathcal{A}$-D3 right $R$-module is $D3$.
\end{enumerate}
\end{enumerate}
Moreover, we give an equivalent condition for a regular $V$-module. It is shown that a regular right $R$-module $F$  is a $V$-module if and only if every $F$-cyclic module is simple-direct-injective in Theorem \ref{thm:4.10a}. It is an extension the result of rings to modules.

Throughout this paper $R$ denotes an associative ring with identity, and modules will be unitary right $R$-modules. The Jacobson radical   ideal in $R$ is denoted by $J(R)$.   The notations $N \leq M$, $N \leq_e M$, $N \unlhd M$, or $N \subset_d M$ mean that $N$ is a submodule, an essential submodule, a fully invariant submodule, and  a direct summand of $M$,  respectively. Let $M$ and $N$ be right $R$-modules. $M$ is called $N$-injective  if for any right $R$-module $K$ and any monomorphism $f: K\to N$, the induced  homomorphism $\Hom(N,M)\to \Hom(K,M)$ by $f$  is an epimorphism. $M$ is called $N$-projective   if for any right $R$-module $K$ and any epimorphism $f: N\to K$, the induced homomorphism $\Hom(M,N)\to \Hom(M,K)$ by $f$  is an epimorphism. Let $\mathcal{A}$ be a class of right modules over the ring $R$. $M$ is called {\it $\mathcal{A}$-injective} ({\it $\mathcal{A}$-projective}) if $M$ is $N$-injective (resp.,  $N$-projective) for all $N\in \mathcal{A}.$ We refer to \cite{AF}, \cite{DHSW}, \cite{MM}, and \cite{W} for all the
undefined notions in this paper.

\section{On $\mathcal{A}$-C3  modules and $\mathcal{A}$-D3 modules}

Let $\mathcal{A}$ be a class of right modules over a ring $R$ and  closed under
isomorphisms. We call that a right $R$-module $M$ is  an $\mathcal{A}$-C3 module if, whenever $A\in \mathcal{A}$ and $B\in \mathcal{A}$ are submodules of $M$ with $A \subset_d M$, $B \subset_d M$ and $A\cap B=0$, then $A\oplus B\subset_d  M $.   Dually, $M$ is an $\mathcal{A}$-D3 module if,  whenever $M_{1}$ and $M_{2}$ are direct summands of $M$ with  $M/M_1, M/M_2 \in \mathcal{A}$ and \ $M=M_{1}+M_{2},$ then $M_{1}\cap M_{2}$ is a direct summand of $M$.

\begin{rem} Let $M$ be a right $R$-module and $\mathcal{A}$,  a class of right $R$-modules.
\begin{enumerate}
\item  If $M$ is a   C3 (D3) module, then $M$ is an  $\mathcal{A}$-C3 (resp., $\mathcal{A}$-D3) module.
\item  If $\mathcal{A}={\rm Mod}-R$, then   $\mathcal{A}$-C3 modules ($\mathcal{A}$-D3 modules) modules are precisely the   C3 modules (resp.,  D3) modules.
\item  If $\mathcal{A}$ is the  class of all simple submodules of $M$, then $\mathcal{A}$-C3 ($\mathcal{A}$-D3) modules  are precisely the simple-direct-injective (resp., simple-direct-projective) modules and studied in \cite{CIYZ, IKQY}.
\item  If $\mathcal{A}$ is a class of injective right $R$-modules, then $M$ is always an $\mathcal{A}$-C3  module.
\item  If $\mathcal{A}$ is a class of  projective  right $R$-modules, then $M$ is always an $\mathcal{A}$-D3  module.
\end{enumerate}
\end{rem}
\begin{lem}\label{lem1}
Let $\mathcal{A}$ be a class of right $R$-modules and  closed under
isomorphisms. Then every summand of an  $\mathcal{A}$-C3 module ($\mathcal{A}$-D3 module) is also an    $\mathcal{A}$-C3 module (resp.,  $\mathcal{A}$-D3 module).

\end{lem}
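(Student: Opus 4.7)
The plan is to handle the $\mathcal{A}$-C3 and $\mathcal{A}$-D3 statements separately but with the same underlying trick: lift the data from a summand $N$ of $M$ to the ambient $M$, apply the hypothesis on $M$, and then push the resulting direct summand back down to $N$.

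For the $\mathcal{A}$-C3 case, suppose $M = N \oplus N'$ and take $A, B \in \mathcal{A}$ that are direct summands of $N$ with $A \cap B = 0$. Since a direct summand of a direct summand is a direct summand, both $A$ and $B$ are direct summands of $M$. The $\mathcal{A}$-C3 property of $M$ then yields $A \oplus B \subset_d M$. The final step is to observe that $A \oplus B \subseteq N$, so writing $M = (A \oplus B) \oplus C$ the modular law gives $N = (A \oplus B) \oplus (N \cap C)$, which finishes the argument.

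For the $\mathcal{A}$-D3 case, assume $M = N \oplus N'$ and let $N_1, N_2 \subset_d N$ with $N/N_1, N/N_2 \in \mathcal{A}$ and $N_1 + N_2 = N$. The idea is to put $M_i := N_i \oplus N'$ for $i = 1, 2$. Each $M_i$ is a direct summand of $M$ (pick complements of $N_i$ in $N$ and add them to obtain the complements of $M_i$ in $M$), and $M_1 + M_2 = (N_1 + N_2) + N' = M$. Moreover $M/M_i = (N \oplus N')/(N_i \oplus N') \cong N/N_i$, which lies in $\mathcal{A}$ because $\mathcal{A}$ is closed under isomorphisms. The $\mathcal{A}$-D3 hypothesis on $M$ therefore yields $M_1 \cap M_2 \subset_d M$.

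The last ingredient is to identify the intersection: a routine calculation using $N \cap N' = 0$ shows $M_1 \cap M_2 = (N_1 \cap N_2) \oplus N'$. Writing $M = ((N_1 \cap N_2) \oplus N') \oplus Y$ for some $Y$, one passes to $M/N' \cong N$: since $N' \subseteq M_1 \cap M_2$, the decomposition descends to $N \cong M/N' = (N_1 \cap N_2) \oplus (Y + N')/N'$, proving that $N_1 \cap N_2$ is a direct summand of $N$. The only non-bookkeeping step is the identification $M_1 \cap M_2 = (N_1 \cap N_2) \oplus N'$ together with the descent to $N$; both are clean consequences of the direct-sum decomposition $M = N \oplus N'$, so I expect no real obstacle beyond careful notation.
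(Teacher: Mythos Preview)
Your argument is correct and is precisely the routine verification the paper has in mind; the paper itself dismisses the proof as ``straightforward'' without giving details, and your lift-and-descend strategy (using $M_i=N_i\oplus N'$ in the D3 case and the modular law to pull summands back to $N$) is exactly the expected computation.
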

\begin{proof}
The proof is straightforward.
\end{proof}
\begin{prop}\label{lemQ}
Let $\mathcal{A}$ be a class of right $R$-modules and  closed under
direct summands. Then the following conditions are equivalent for a module $M$:
\begin{enumerate}
\item $M$ is an $\mathcal{A}$-C3 module.
\item If  $A\in \mathcal{A}$ and $B\in \mathcal{A}$ are submodules of $M$ with $A \subset_d M$, $B \subset_d M$ and $A\cap B=0$, there exist submodules $A_1$ and $B_1$ of   $M$ such that  $M=A\oplus B_1=A_1\oplus B$ with $A\leq A_1$ and $B\leq B_1$.
\item If  $A\in \mathcal{A}$ and $B\in \mathcal{A}$ are submodules of $M$ with $A \subset_d M$, $B \subset_d M$ and $A\cap B\subset_d M$, then  $A+B\subset_d M$.
\end{enumerate}
\end{prop}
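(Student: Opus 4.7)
The plan is to prove the two equivalences (1)$\Leftrightarrow$(2) and (1)$\Leftrightarrow$(3) separately; the first is bookkeeping with a complement of $A\oplus B$, while the second requires a reduction via the modular law that genuinely uses closure of $\mathcal{A}$ under direct summands.

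For (1)$\Rightarrow$(2), given the hypotheses on $A$ and $B$, the $\mathcal{A}$-C3 assumption produces a decomposition $M=(A\oplus B)\oplus C$. I would then take $A_{1}:=A\oplus C$ and $B_{1}:=B\oplus C$, so that $M=A_{1}\oplus B=A\oplus B_{1}$ with $A\leq A_{1}$ and $B\leq B_{1}$. For the converse (2)$\Rightarrow$(1), starting from $M=A\oplus B_{1}$ with $B\leq B_{1}$ and using $M=B\oplus D$ coming from $B\subset_{d}M$, the modular law gives $B_{1}=B\oplus(B_{1}\cap D)$; setting $E:=B_{1}\cap D$ yields $M=A\oplus B\oplus E$, so $A\oplus B\subset_{d}M$.

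The heart of the proposition is (1)$\Rightarrow$(3). Given $A,B\in\mathcal{A}$ that are direct summands with $A\cap B\subset_{d}M$, I would pick a complement $M=(A\cap B)\oplus X$. Two applications of the modular law give
\[
A=(A\cap B)\oplus (A\cap X),\qquad B=(A\cap B)\oplus (B\cap X).
\]
Set $A':=A\cap X$ and $B':=B\cap X$. Each is a summand of the corresponding summand of $M$, hence a summand of $M$, and closure of $\mathcal{A}$ under direct summands places $A',B'\in\mathcal{A}$. Because $A'\cap B'\leq (A\cap B)\cap X=0$, hypothesis (1) yields $A'\oplus B'\subset_{d}M$. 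Since $A'\oplus B'\subseteq X$, one more modular-law step shows $A'\oplus B'\subset_{d}X$, say $X=(A'\oplus B')\oplus W$, and therefore
\[
M=(A\cap B)\oplus A'\oplus B'\oplus W=(A+B)\oplus W.
\]
For (3)$\Rightarrow$(1) I would simply specialize to $A\cap B=0$, which is trivially a direct summand of $M$, so (3) delivers the $\mathcal{A}$-C3 conclusion.

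The main obstacle I anticipate is the argument in (1)$\Rightarrow$(3): it is essential that the \emph{same} complement $X$ of $A\cap B$ be used to produce compatible decompositions of $A$ and of $B$, so that $A'$ and $B'$ lie inside a common summand and the triple direct sum $(A\cap B)\oplus A'\oplus B'$ rebuilds $A+B$. The role of closure under direct summands is localized precisely at the point where we need $A',B'\in\mathcal{A}$ in order to invoke the $\mathcal{A}$-C3 hypothesis.
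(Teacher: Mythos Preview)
Your proof is correct in every step; each implication is verified cleanly, and you identify exactly where closure of $\mathcal{A}$ under direct summands is needed. The paper does not actually spell out an argument here but merely observes that the proof is analogous to \cite[Proposition~2.2]{Amin}; your write-up is essentially what one obtains by carrying that proof over to the $\mathcal{A}$-relative setting.
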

\begin{proof}
It is similar to  the proof of Proposition 2.2 in \cite{Amin}.
\end{proof}
Dually Proposition \ref{lemQ}, we have the following proposition.
\begin{prop}
Let $\mathcal{A}$ be a class of right $R$-modules and  closed under
isomorphisms.  Then the following conditions are equivalent for a module $M$:
\begin{enumerate}
\item $M$ is an $\mathcal{A}$-D3 module.
\item If  $M/A, M/B\in \mathcal{A}$ with $A \subset_d M$, $B \subset_d M$ and $M=A+ B$, then  $M=A\oplus B_1=A_1\oplus B$ with $A_1\leq A$ and $B_1\leq B$.
\item If  $M/A, M/B\in \mathcal{A}$ with  $A \subset_d M$, $B \subset_d M$ and $A+ B\subset_d M$, then  $A\cap B\subset_d M$.
\end{enumerate}
\end{prop}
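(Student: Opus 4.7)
The plan is to prove (1) $\Leftrightarrow$ (2) and (1) $\Leftrightarrow$ (3), dualising the argument sketched for Proposition \ref{lemQ} (compare Proposition 2.2 of \cite{Amin}). Throughout I shall use the elementary fact that if $X \subset_d M$ and $X \le Y \subset_d M$, then $X \subset_d Y$, so that for any complement $X'$ of $X$ in $M$, modularity gives $Y = X \oplus (Y \cap X')$.

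For (1) $\Rightarrow$ (2), the $\mathcal{A}$-D3 hypothesis gives $A \cap B \subset_d M$; since $A \cap B \le A \subset_d M$, write $A = (A \cap B) \oplus A_1$ with $A_1 \le A$, and similarly $B = (A \cap B) \oplus B_1$ with $B_1 \le B$. A short verification using $A \cap B_1 \le (A \cap B) \cap B_1 = 0$ (and symmetrically $A_1 \cap B = 0$) yields $M = A \oplus B_1 = A_1 \oplus B$. For the converse (2) $\Rightarrow$ (1), given $M = A_1 \oplus B = A \oplus B_1$ with $A_1 \le A$, $B_1 \le B$, the modular law produces $A = A \cap (A_1 \oplus B) = A_1 \oplus (A \cap B)$; substituting into $M = A \oplus B_1$ gives $M = A_1 \oplus (A \cap B) \oplus B_1$, which exhibits $A \cap B$ as a direct summand.

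The implication (3) $\Rightarrow$ (1) is immediate on taking $A + B = M$ in (3), since $M$ is trivially its own direct summand. For (1) $\Rightarrow$ (3), set $N = A + B \subset_d M$ and write $M = N \oplus C$. Because $A, B \subset_d M$ both lie inside $N \subset_d M$, they are direct summands of $N$ and $N = A + B$. By Lemma \ref{lem1}, $N$ inherits the $\mathcal{A}$-D3 property from $M$, so the $\mathcal{A}$-D3 property applied to $(A, B)$ inside $N$ would yield $A \cap B \subset_d N$; combined with $N \subset_d M$ this gives $A \cap B \subset_d M$.

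The principal obstacle is precisely this last step: to invoke $\mathcal{A}$-D3 inside $N$ one needs $N/A, N/B \in \mathcal{A}$, while a priori only $M/A, M/B \in \mathcal{A}$ is assumed. The decomposition $M/A \cong N/A \oplus (A+C)/A$ shows that $N/A$ is isomorphic to a direct summand of $M/A$, so the step goes through whenever $\mathcal{A}$ is closed under direct summands (the natural dual of the hypothesis used in Proposition \ref{lemQ}); one should check that some such closure condition is tacitly in force in the statement, or else restrict attention to those $\mathcal{A}$ for which it holds.
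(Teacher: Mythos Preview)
Your argument is precisely the dualization the paper intends: the paper gives no proof of this proposition beyond the remark ``Dually Proposition~\ref{lemQ}'', so there is nothing further to compare. Your proofs of $(1)\Leftrightarrow(2)$ and $(3)\Rightarrow(1)$ are clean and correct.

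Your observation about $(1)\Rightarrow(3)$ is well taken and not merely a technicality. To apply the $\mathcal{A}$-D3 hypothesis inside $N=A+B$ one needs $N/A,\,N/B\in\mathcal{A}$, and as you note these are only \emph{summands} of $M/A,\,M/B\in\mathcal{A}$. Closure of $\mathcal{A}$ under isomorphisms alone does not suffice; closure under direct summands does. This is exactly the hypothesis the paper imposes in the dual Proposition~\ref{lemQ} (``closed under direct summands''), whereas here the stated hypothesis is the weaker ``closed under isomorphisms''. The discrepancy appears to be an oversight in the paper's statement rather than a flaw in your reasoning: the natural dual of Proposition~\ref{lemQ} should carry the same closure-under-summands assumption, and with that assumption your argument for $(1)\Rightarrow(3)$ goes through verbatim.
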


Let $f: A\to B$ be a homomorphism. We denote by $\langle f \rangle$ the submodule of $A\oplus B$ as follows: $$ \langle f \rangle=\{a+f(a)\ | \ a\in A\}.$$
\vskip 0.2cm
 The following result is  proved in Lemma 2.6 of \cite{KMO}.
\begin{lem}\label{lem:addd} Let $M=X\oplus Y$ and $f: A\to Y$, a homomorphism with $A\leq X$. Then the following conditions hold
\begin{enumerate}
\item $A\oplus Y=\langle f \rangle\oplus Y$.
\item $\Ker(f)=X\cap \langle f \rangle$.
\end{enumerate}
\end{lem}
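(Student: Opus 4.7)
The plan is to prove both parts by direct verification, exploiting the explicit definition of $\langle f\rangle=\{a+f(a)\mid a\in A\}$ together with the ambient decomposition $M=X\oplus Y$ (in particular the fact that $X\cap Y=0$ and that $A\leq X$). No homological machinery is needed; the statement is really a bookkeeping lemma about the graph of $f$ sitting inside $X\oplus Y$.

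For part (1), I would first check that both sums are genuinely direct. The sum $A\oplus Y$ is direct because $A\cap Y\leq X\cap Y=0$. For $\langle f\rangle\cap Y$, I would take an arbitrary element $a+f(a)\in\langle f\rangle$ and suppose it lies in $Y$; since $a\in A\leq X$ and $f(a)\in Y$, the uniqueness of the decomposition along $X\oplus Y$ forces $a=0$, hence $a+f(a)=0$. To prove the equality $A\oplus Y=\langle f\rangle\oplus Y$, I would show containment in both directions on the underlying sums: every generator $a+f(a)$ of $\langle f\rangle$ obviously lies in $A+Y$, and conversely each $a\in A$ can be rewritten as $a=(a+f(a))-f(a)\in\langle f\rangle+Y$. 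Both sums then coincide as sets, and since both are direct the claim follows.

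For part (2), I would establish the two inclusions separately. If $a\in\Ker(f)$, then $a+f(a)=a$ belongs to $\langle f\rangle$ and also to $A\leq X$, which gives the inclusion $\Ker(f)\subseteq X\cap\langle f\rangle$. Conversely, take $x\in X\cap\langle f\rangle$ and write $x=a+f(a)$ with $a\in A$; then $f(a)=x-a$ lies in $X$ (since both $x$ and $a$ do) and also in $Y$, so $f(a)\in X\cap Y=0$. Hence $a\in\Ker(f)$ and $x=a$, giving the reverse inclusion.

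The main obstacle is essentially non-existent: the argument is pure unpacking of definitions. The only point requiring a little care is to keep track of which summand ($X$ or $Y$) each piece lives in when invoking $X\cap Y=0$; that observation is what makes both halves of the lemma work.
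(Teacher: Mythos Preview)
Your proof is correct and entirely self-contained. The paper itself does not supply a proof of this lemma; it simply cites Lemma~2.6 of \cite{KMO}. Your argument---unpacking the explicit description of the graph $\langle f\rangle$ and repeatedly invoking $X\cap Y=0$ from the ambient decomposition $M=X\oplus Y$---is the standard direct verification, so there is nothing further to compare.
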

\begin{prop}\label{pro:add1} Let $M$ be an $\mathcal{A}$-D3 module with $\mathcal{A}$  a class of right $R$-modules and  closed under isomorphisms and summands.  If  $M=M_1\oplus M_2$ and  $f:M_1\to M_2$ is a homomorphism  with $\im(f)\subset_d M_2$ and $\im(f)\in \mathcal{A}$, then $\Ker(f)$ is a direct summand of $M_1$.
\end{prop}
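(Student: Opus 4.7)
The plan is to exhibit two particular direct summands of $M$ to which I can apply the $\mathcal{A}$-D3 hypothesis, and whose intersection encodes $\Ker(f)$. Since $\im(f)$ is a direct summand of $M_2$, I first write $M_2=N\oplus N'$ with $N=\im(f)\in\mathcal{A}$. The candidate summands are $X=M_1\oplus N'$ and $Y=\langle f\rangle\oplus N'$, where $\langle f\rangle$ is the graph of $f$ from Lemma \ref{lem:addd}. By that lemma, $M=\langle f\rangle\oplus M_2$, so combined with $M=M_1\oplus M_2=M_1\oplus N\oplus N'$, I get $M=X\oplus N=Y\oplus N$; in particular both $X$ and $Y$ are direct summands of $M$ with quotient isomorphic to $N\in\mathcal{A}$, and closure of $\mathcal{A}$ under isomorphism places both quotients in $\mathcal{A}$.

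To apply the $\mathcal{A}$-D3 property I still need $X+Y=M$. This will follow because $f(m_1)=(m_1+f(m_1))-m_1\in M_1+\langle f\rangle$ for every $m_1\in M_1$, hence $X+Y=M_1+\im(f)+N'=M_1+N+N'=M$. The $\mathcal{A}$-D3 hypothesis then gives $X\cap Y\subset_d M$. Using the decomposition $M=M_1\oplus N\oplus N'$, a direct computation identifies the intersection: an element of $X$ has the form $m_1+n'$ (with $M_1$-, $N$-, $N'$-coordinates $m_1,0,n'$), while an element of $Y$ has the form $m_1'+f(m_1')+n''$ (with coordinates $m_1',f(m_1'),n''$), and equating coordinates forces $f(m_1')=0$. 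Thus $X\cap Y=\Ker(f)\oplus N'$, which is therefore a direct summand of $M$.

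It remains to descend from $M$ to $M_1$. Since $\Ker(f)\oplus N'\subseteq X$ and both are direct summands of $M$, the modular law yields $X=(\Ker(f)\oplus N')\oplus S$ for some $S\leq X$. Projecting $X=M_1\oplus N'$ onto $M_1$ along $N'$ sends $\Ker(f)\oplus N'$ onto $\Ker(f)$ and $S$ onto some $\pi(S)\leq M_1$, so $M_1=\Ker(f)+\pi(S)$; the sum is direct, since if $\pi(s)=k\in\Ker(f)$ for $s\in S$ then $s-k\in N'$, forcing $s\in(\Ker(f)\oplus N')\cap S=0$. I expect the main obstacle to be guessing the correct pair $(X,Y)$: one needs them to have quotients in $\mathcal{A}$, to sum to $M$, and to have an intersection visibly involving $\Ker(f)$. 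The graph $\langle f\rangle$ is the key device that produces a "twisted" second summand whose intersection with $M_1\oplus N'$ detects exactly $\Ker(f)$.
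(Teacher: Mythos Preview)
Your argument is correct. The difference from the paper's proof is one of economy: the paper first passes to the direct summand $M':=M_1\oplus\im(f)$ of $M$, which is again an $\mathcal{A}$-D3 module by Lemma~\ref{lem1}. Inside $M'$ it then applies the $\mathcal{A}$-D3 hypothesis directly to the pair $(M_1,\langle f\rangle)$, since $M'=M_1\oplus\im(f)=\langle f\rangle\oplus\im(f)$ gives $M'/M_1,\,M'/\langle f\rangle\cong\im(f)\in\mathcal{A}$ and $M'=M_1+\langle f\rangle$; Lemma~\ref{lem:addd} identifies $M_1\cap\langle f\rangle=\Ker(f)$, which is then a summand of $M'$ and hence of $M_1$. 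Your approach carries the complementary summand $N'$ through the whole computation and applies the $\mathcal{A}$-D3 condition in $M$ itself, which forces you to compute $X\cap Y=\Ker(f)\oplus N'$ and then perform the extra descent step from $X$ to $M_1$. Both routes hinge on the same graph construction; restricting to $M'$ at the outset simply strips away $N'$ and eliminates the final projection argument.
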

\begin{proof} Assume that $M=M_1\oplus M_2$  and a homomorphism $f: M_1\to M_2$ with $\im(f)\subset_d M_2$ and $\im(f)\in \mathcal{A}$. Call $M':=M_1\oplus \im(f)$. Then $M'$ is a direct summand of $M$ and so is  an $\mathcal{A}$-D3 module.  It follows that $M'=M_1\oplus \im(f)=\langle f \rangle\oplus \im(f)$ by Lemma \ref{lem:addd}. It is easily to check  $M' / M_1, M' / \langle f \rangle \in \mathcal {A}$ and $M'=M_1+\langle f \rangle$. As $M'$ is  an $\mathcal{A}$-D3 module  and by  Lemma \ref{lem:addd}, $\langle f \rangle \cap M_1=\Ker(f)$ is a direct summand of $M'$. Thus $\Ker(f)$ is a direct summand of $M_1$.
\end{proof}

\begin{prop}\label{4.2} Let $M$ be   a right $R$-module and $\mathcal{A}$,  a class of right $R$-modules and  closed under isomorphisms and summands.
If each factor module of $M$ is  $\mathcal {A}$-injective, then the following conditions are equivalent:
\begin{enumerate}
\item For any two direct summands $M_1, M_2$ of $M$ such that $M/M_1, M/M_2 \in \mathcal {A}$,    $M_1\cap M_2$ is a direct summand of $M$.
\item $M$ is an $\mathcal{A}$-D3 module.
\item Any submodule $N$ of $M$ such that the factor module $M/N\in \mathcal {A}$ is isomorphic to a direct summand of $M$, is a direct summand of $M$.
\item For any decomposition $M=M_1\oplus M_2$ with $M_2\in \mathcal{A}$, then every homomorphism $f:M_1\to M_2$ has the kernel a direct summand of $M_1$.
\item  Whenever  $X_1,\ldots, X_n$ are direct summands of $M$ and $M/X_1,\dots, M/X_n \in \mathcal {A},$ then $\cap_{i=1}^nX_i$ is a direct summand of $M$.
\end{enumerate}

\end{prop}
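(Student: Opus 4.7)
My plan is to prove the equivalences through the cycle $(1) \Rightarrow (4) \Rightarrow (3) \Rightarrow (2) \Rightarrow (1)$ together with the separate equivalence $(1) \Leftrightarrow (5)$. Several implications are formal: $(1) \Rightarrow (2)$ is immediate because (2) specialises (1) by imposing $M_1 + M_2 = M$, and $(5) \Rightarrow (1)$ is the case $n = 2$ of (5).

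For $(1) \Rightarrow (4)$, the $\mathcal{A}$-injectivity hypothesis is not required. Given $M = M_1 \oplus M_2$ with $M_2 \in \mathcal{A}$ and $f : M_1 \to M_2$, I form the graph $\langle -f \rangle = \{m_1 - f(m_1) : m_1 \in M_1\}$. Lemma \ref{lem:addd} gives $M = \langle -f \rangle \oplus M_2$, so $\langle -f \rangle$ is a direct summand with $M/\langle -f \rangle \cong M_2 \in \mathcal{A}$, and $M_1 \cap \langle -f \rangle = \ker f$. Applying (1) to the summands $M_1$ and $\langle -f \rangle$ yields $\ker f \subset_d M$, hence $\ker f \subset_d M_1$ by modularity.

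The critical step is $(2) \Rightarrow (1)$, where the $\mathcal{A}$-injectivity of the factors of $M$ enters essentially. Given summands $M_1, M_2$ with $M/M_i \in \mathcal{A}$, write $M = M_1 \oplus N_1 = M_2 \oplus N_2$ with $N_i \in \mathcal{A}$. The image $\pi_{N_1}(M_2) \leq N_1$ is isomorphic to $M_2/(M_1 \cap M_2)$, and the latter is in turn isomorphic to $M/(N_2 + M_1 \cap M_2)$, a factor of $M$; by hypothesis this factor is $\mathcal{A}$-injective. Since $N_1 \in \mathcal{A}$, the identity on $\pi_{N_1}(M_2)$ extends via this injectivity to a retraction $N_1 \to \pi_{N_1}(M_2)$, so $\pi_{N_1}(M_2) \subset_d N_1$ and consequently $M_1 + M_2 = M_1 \oplus \pi_{N_1}(M_2) \subset_d M$. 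Setting $M' := M_1 + M_2$, the summands $M_1, M_2$ of $M'$ satisfy $M_1 + M_2 = M'$ with factors in $\mathcal{A}$ (by the same retraction argument); $M'$ is $\mathcal{A}$-D3 by Lemma \ref{lem1}, and thus $M_1 \cap M_2 \subset_d M' \subset_d M$.

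For $(4) \Rightarrow (3)$, given $N \leq M$ with $M/N \in \mathcal{A}$ and an isomorphism $\phi : M_2' \to N$ (with $M = M_1' \oplus M_2'$), I decompose $\phi = \alpha + \beta$ with $\alpha : M_2' \to M_1'$ and $\beta : M_2' \to M_2'$. When $\beta$ is an automorphism, $\phi \circ \beta^{-1}$ realises $N$ as the graph $\langle \alpha \beta^{-1} \rangle \leq M_2' \oplus M_1' = M$, and this is a direct summand by Lemma \ref{lem:addd}. The general case is handled by invoking (4) together with the $\mathcal{A}$-injectivity of the factor $M/N$ to show $\ker \beta \subset_d M_2'$, reducing to the invertible case on the complement. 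Similarly, $(3) \Rightarrow (2)$ follows by applying (3) to $M_1 \cap M_2$ after using the retraction argument above to realise $M/(M_1 \cap M_2)$ as isomorphic to a direct summand of $M$. Finally, $(1) \Rightarrow (5)$ is by induction on $n$: at each step the same retraction argument as in $(2) \Rightarrow (1)$ applies to the pair $(\bigcap_{i<k}X_i,\; X_k)$, with the $\mathcal{A}$-injectivity of factors of $M$ guaranteeing that the required splittings exist so that the induction can continue. The principal obstacle is the retraction in $(2) \Rightarrow (1)$: identifying $M_2/(M_1 \cap M_2)$ with the factor $M/(N_2 + M_1 \cap M_2)$ of $M$ to invoke the hypothesis, and then using this to split the inclusion $\pi_{N_1}(M_2) \hookrightarrow N_1$; the non-invertible case in $(4) \Rightarrow (3)$ is the other delicate point.
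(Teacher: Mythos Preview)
Your cycle and the steps $(1)\Rightarrow(4)$ and $(2)\Rightarrow(1)$ are essentially the paper's, and they are fine. The gap is in $(4)\Rightarrow(3)$: you have misread condition~(3). The hypothesis there is that the \emph{factor} $M/N$ lies in $\mathcal{A}$ and is isomorphic to a direct summand of $M$; it is \emph{not} assumed that $N$ itself is isomorphic to a summand. So positing an isomorphism $\phi:M_2'\to N$ with $M=M_1'\oplus M_2'$ is unwarranted, and the graph/invertible-$\beta$ reduction you sketch does not even begin from the correct data. The paper's argument takes the correct setup $M=M_1\oplus A$ with $A\cong M/N$, $A\in\mathcal{A}$, forms the composite $f=\phi\circ(p|_{M_1}):M_1\to A$ (where $p:M\to M/N$ is the projection and $\phi:M/N\to A$ the given isomorphism), observes $\Ker f=M_1\cap N$, applies (4), and then assembles a complement of $N$ by a direct computation; the $\mathcal{A}$-injectivity of $\pi_2(N)$ is used to split it off inside $A$.

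Your $(3)\Rightarrow(2)$ also does not work as written: applying (3) to $N=M_1\cap M_2$ would require $M/(M_1\cap M_2)\in\mathcal{A}$, but this quotient is $M/M_1\oplus M/M_2$ and $\mathcal{A}$ is not assumed closed under finite direct sums. The clean fix (and what makes the implication ``obvious'') is to apply (3) not to $M_1\cap M_2$ but to $N':=(M_1\cap M_2)\oplus K$ where $M=M_1\oplus K$: then $M/N'\cong M_1/(M_1\cap M_2)\cong M/M_2\in\mathcal{A}$ is isomorphic to the summand complementary to $M_2$, so (3) gives $N'\subset_d M$ and hence $M_1\cap M_2\subset_d M$. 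A similar adjustment is needed in your induction for $(1)\Rightarrow(5)$: one cannot apply (1) to the pair $\bigl(\bigcap_{i<k}X_i,\,X_k\bigr)$ directly because $M/\bigcap_{i<k}X_i$ need not lie in $\mathcal{A}$; the paper instead shows that $M/\bigl((\bigcap_{i\le k}X_i)\oplus N\bigr)\in\mathcal{A}$ (where $N$ complements $\bigcap_{i<k}X_i$) and invokes the already-established (3).
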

\begin{proof}
$(2) \Rightarrow (1)$. Let $M_1, M_2$ be direct summands of $M$ such that $M / M_1, M / M_2 \in \mathcal {A}$. Then $M = M_1 \oplus M_1'$. Without loss of generality we can assume that $M_2 \nsubseteq M_1, M_2 \nsubseteq M_1'$. From our  assumption, $\pi (M_2)$ is a direct summand of $M_1 '$.  Then we can write  $M_1 '= \pi (M_2) \oplus M_1''$ for some $M_1''\leq M_1'$. Since the class $\mathcal{A}$ is closed under direct summands, $M_1''\in \mathcal{A}$. It is easy to see that $M_1 + M_1''$ is a direct summand of $M$. We have  $M / (M_1 + M_1'') \in \mathcal {A}$ and $M_1 + M_1'' + M_2 = M$. It follows that  $M_1 \cap M_2 = (M_1 + M_1'') \cap M_2$ is a direct summand of $M$.

$(3)\Rightarrow(2)$. It is obvious.

$(1)\Rightarrow(4)$. Assume that $M=M_1\oplus M_2$ with $M_2\in \mathcal{A}$ and a homomorphism $f: M_1\to M_2$. It follows that $M=M_1\oplus M_2=\langle f \rangle\oplus M_2$ by Lemma \ref{lem:addd}. Note that $M / M_1, M / \langle f \rangle \in \mathcal {A}$. By (1) and Lemma \ref{lem:addd}, $\langle f \rangle \cap M_1=\Ker(f)$ is a direct summand of $M$. Thus $\Ker(f)$ is a direct summand of $M_1$.

$(4)\Rightarrow(3)$. Let $M_1, M_2$ be submodules of $M$  such that $M = M_1 \oplus A$, $M / M_2 \cong A$ and $A \in \mathcal {A}$. Call $\pi_1: M\to M_1$ and $\pi_2: M\to A$ the projections.  By the hypothesis, $\pi_2 (M_2)$ is a direct summand of $A$   and hence   $A = \pi_2 (M_2) \oplus B$ for some submodule $B$ of $A$. Call $p: M\to M/M_2$ the canonical projection and isomorphism  $\phi: M/M_2\to A$. Take the homomorphism $f=\phi\circ(p|_{M_1}): M_1\to A$. It follows that $\Ker (f) = M_1 \cap M_2$.   By (4),  $\Ker (f) = M_1 \cap M_2$ is a direct summand of $M_1$. Call   $N_1$ a submodule of  $M_1$ with  $M_1 = N_1 \oplus (M_1 \cap M_2)$. Note that $M_1+M_2=M_1\oplus \pi_2(M_2)$ and $N_1\cap M_2=0$. This gives that
$$\begin{array}{llll}M &= M_1\oplus \pi_2(M_2)\oplus  B\\
&=(M_1+M_2)\oplus  B\\
&=[N_1 \oplus (M_1 \cap M_2)+M_2] \oplus  B=(N_1 +M_2) \oplus  B\\
&=(N_1\oplus M_2)\oplus B.
\end{array}$$

$(1) \Rightarrow (5)$.
We prove this by induction on $n$.
When $ n = 2 $,  the assertion is true from (1). Suppose that the assertion is true for $n = k.$
Let $X_1, X_2, \ldots, X_ {k + 1}$ be  summands of $ M $ and $M / X_1, M/X_2, \ldots, M / X_{k + 1} \in \mathcal{A}. $  We can write  $M=\cap_{i=1}^kX_i\oplus N$ for some submodule $N$ of  $M$. Without loss of generality we can assume that $\cap_{i=1}^kX_i\nsubseteq X_{k+1}$. Let $f:M\rightarrow M/X_{k+1}$ be the natural projection. Then $(\cap_{i=1}^kX_i)/[(\cap_{i=1}^kX_i)\cap X_{k+1}]$ is $\mathcal{A}$-injective, and therefore, it is isomorphic to a direct summand of $M/X_{k+1}\in  \mathcal{A}$. This gives that  $\cap_{i=1}^kX_i/\cap_{i=1}^{k+1}X_i$ is isomorphic to a direct summand of $M$ and
$$
M/(\cap_{i=1}^{k+1}X_i\oplus N)=(\cap_{i=1}^kX_i\oplus N)/(\cap_{i=1}^{k+1}X_i\oplus N)\in \mathcal{A}.
$$

Since the equivalence of (1) and (3), $(\bigcap\limits_{i=1}^{k+1}X_i)\oplus N$ is a direct summand of $M$. Thus $\bigcap\limits_{i=1}^{k+1}X_i$ is a direct summand of $M$.
\end{proof}

\begin{cor} The following conditions are equivalent for a module $M$:
\begin{enumerate}
\item If $M/A$ is  a semisimple module and  $B$, a submodule of $M$ with $M/A\cong B\subset_d M$, then $A \subset_d M$.
\item For any two direct summands $A, B$ of $M$ with  $M/A$ and $M/B$ are semisimple modules, then  $A\cap B\subset_d M$.
\item  For any two direct summands $A, B$ of $M$ such that $M/A, M/B$ are  semisimple modules and $A+B= M$,  then  $A\cap B$ is a direct summand of $M$.
\item Whenever  $X_1, X_2,\ldots, X_n$ are direct summands of $M$ and $M/X_1, M/X_2,\ldots, M/X_n$ are  semisimple modules, then $\cap_{i=1}^nX_i$ is a direct summand of $M$.
\end{enumerate}
\end{cor}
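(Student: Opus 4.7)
The plan is to apply Proposition~\ref{4.2} to the specific class $\mathcal{A}$ consisting of all semisimple right $R$-modules. First I would observe that $\mathcal{A}$ is closed under isomorphisms and under taking direct summands, since every module isomorphic to, or a direct summand of, a semisimple module is itself semisimple. These are exactly the standing hypotheses imposed on $\mathcal{A}$ in Proposition~\ref{4.2}.

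Second, I would verify the remaining hypothesis of that proposition: that every factor module of $M$ is $\mathcal{A}$-injective. This in fact holds universally and does not depend on $M$. Indeed, if $N$ is semisimple and $K$ is any submodule of $N$, then $K$ is a direct summand of $N$, so every homomorphism $K \to X$ extends to a homomorphism $N \to X$ by composing with a projection $N \to K$. Hence every right $R$-module is $N$-injective for every semisimple $N$, so in particular every factor module of $M$ is $\mathcal{A}$-injective.

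With both hypotheses in hand, the corollary reduces to matching each of its four conditions with the corresponding statement in Proposition~\ref{4.2}: condition~(1) here is Proposition~\ref{4.2}(3); condition~(2) is Proposition~\ref{4.2}(1); condition~(3), after unfolding the definition of an $\mathcal{A}$-D3 module, is Proposition~\ref{4.2}(2); and condition~(4) is Proposition~\ref{4.2}(5). The desired equivalences then follow directly from Proposition~\ref{4.2}. There is no serious obstacle; the only point requiring attention is the automatic $\mathcal{A}$-injectivity of every module with respect to the class of semisimples, which is what allows the machinery of Proposition~\ref{4.2} to be invoked without any additional assumption on $M$.
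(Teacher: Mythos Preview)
Your proposal is correct and is exactly the intended argument: the paper states this corollary immediately after Proposition~\ref{4.2} without proof, relying on the reader to specialize $\mathcal{A}$ to the class of semisimple modules and to observe, as you do, that every module is automatically $\mathcal{A}$-injective for this choice. Your identification of each item with the corresponding item of Proposition~\ref{4.2} is accurate.
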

\begin{cor} Let $P$ be a quasi-projective module. If $X_1,\ldots, X_n$ are summands of $P$ and $P/X_1,\ldots, P/X_n$ are semisimple modules, then $\cap_{i=1}^nX_i$ is a direct summand of $P$.
\end{cor}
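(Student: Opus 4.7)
My plan is to deduce the corollary from Proposition \ref{4.2} applied with $\mathcal{A}$ the class of all semisimple right $R$-modules, which is plainly closed under isomorphisms and direct summands. The first step is to note that the running hypothesis of Proposition \ref{4.2} is satisfied vacuously in this setting: every right $R$-module is $\mathcal{A}$-injective, since for any semisimple $N$ and any submodule $K\leq N$ the inclusion $K\hookrightarrow N$ splits, so any homomorphism $K\to M$ extends to $N$ by zero on a chosen complement. Consequently conditions (1)--(5) of Proposition \ref{4.2} are all equivalent for $P$, and since (5) is precisely the statement of the corollary, it suffices to verify any one of the other four conditions; I would aim at (3).

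The substance of the argument is the following standard consequence of quasi-projectivity: if $N\leq P$ with $P/N$ isomorphic to a direct summand $D$ of $P$, then $N$ is itself a direct summand of $P$. To prove it I would write $P=D\oplus D'$, fix an isomorphism $\phi:P/N\to D$, and denote by $\pi_D:P\to D$ the projection along $D'$. Then both $q:P\to P/N$ (the canonical surjection) and $\psi:=\phi^{-1}\pi_D:P\to P/N$ are epimorphisms; applying quasi-projectivity of $P$ to lift $\psi$ through $q$ produces $\sigma:P\to P$ with $q\sigma=\psi$. The relation $q(\sigma(D))=\psi(D)=\phi^{-1}(D)=P/N$ gives $\sigma(D)+N=P$, while $\sigma(d)\in N$ forces $\psi(d)=\phi^{-1}(d)=0$, hence $d=0$ since $\phi^{-1}$ is injective; thus $\sigma(D)\cap N=0$ and $P=\sigma(D)\oplus N$.

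This verifies condition (3) of Proposition \ref{4.2} for $P$, and the equivalence $(3)\Leftrightarrow(5)$ delivers the corollary. The main obstacle, such as it is, lies in recognizing the correct epimorphism $\psi$ to lift against $q$; after that the verification is pure direct-summand bookkeeping, and no further ring-theoretic input beyond quasi-projectivity of $P$ is needed.
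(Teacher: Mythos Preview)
Your proposal is correct and follows essentially the same route the paper intends: the corollary is placed immediately after the specialization of Proposition~\ref{4.2} to $\mathcal{A}=\{\text{semisimple modules}\}$, and the implicit argument is precisely that a quasi-projective module satisfies the $D2$ condition (your condition~(3)), whence condition~(5) holds. Your explicit verification of $D2$ via the lifting $\sigma$ of $\psi=\phi^{-1}\pi_D$ through $q$ is standard and accurate; the paper simply takes this fact for granted.
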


\begin{cor} The following conditions are equivalent for a module $M$:
\begin{enumerate}
\item For any maximal submodule $A$ of $M$  and any submodule $B$ of $M$ such that  $M/A\cong B\subset_d M,\ A \subset_d M$.
\item For any two maximal summands $A, B$ of $M,$  $A\cap B\subset_d M$.
\item If $M/A$ is a finitely generated semisimple module with $M/A\cong B\subset_d M$, then $A\subset_d M$.
\item Whenever  $X_1, X_2,\ldots, X_n$ are maximal summands of $M$,  then $\cap_{i=1}^nX_i$ is a direct summand of $M$.
\end{enumerate}
\end{cor}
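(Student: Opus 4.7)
My plan is to deduce this corollary from Proposition \ref{4.2} by specializing $\mathcal{A}$ to the class of simple right $R$-modules together with the zero module, so that $\mathcal{A}$ is closed under isomorphisms and direct summands. The hypothesis of Proposition \ref{4.2} is then automatic: the only proper submodule of a simple module is $0$, so every module is $S$-injective for every simple $S$, and hence every factor module of $M$ is $\mathcal{A}$-injective. Under this choice, a direct summand $X\subset_d M$ satisfies $M/X\in\mathcal{A}$ nontrivially precisely when $X$ is a maximal summand, and a submodule $N\leq M$ satisfies $M/N\in\mathcal{A}$ nontrivially precisely when $N$ is a maximal submodule.

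With this dictionary, conditions (3), (1), and (5) of Proposition \ref{4.2} translate verbatim into conditions (1), (2), and (4) of the present corollary, so Proposition \ref{4.2} yields the chain $(1)\Leftrightarrow(2)\Leftrightarrow(4)$ at once. It then remains to prove $(1)\Leftrightarrow(3)$. The direction $(3)\Rightarrow(1)$ is immediate, since a simple module is a (one-summand) finitely generated semisimple module. For $(1)\Rightarrow(3)$, I would argue as follows: given $A\leq M$ with $M/A=\bigoplus_{i=1}^n S_i$ a finite decomposition into simples and an isomorphism $\psi:M/A\to B$ onto a summand $B\subset_d M$, I set $B_i:=\psi(S_i)$, so that $B=\bigoplus_{i=1}^n B_i$ with each $B_i$ a simple direct summand of $M$. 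Letting $\pi:M\to M/A$ be the canonical projection, define $A_i:=\pi^{-1}\bigl(\bigoplus_{j\neq i} S_j\bigr)$; then $A_i$ is a maximal submodule with $M/A_i\cong S_i\cong B_i\subset_d M$, so by (1) each $A_i$ is a maximal direct summand of $M$. Since $A=\bigcap_{i=1}^n A_i$, condition (4) (already known to be equivalent to (1)) forces $A$ to be a summand of $M$.

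The principal obstacle is bookkeeping rather than substance: one must verify that the augmented class of simple modules really meets the closure and $\mathcal{A}$-injectivity prerequisites of Proposition \ref{4.2}, and that the correspondence between the conditions of the two statements is consistent once the trivial case $N=M$ is discarded. Once this translation is in hand, the equivalences fall out immediately from the proposition, and the step $(1)\Rightarrow(3)$ is just a finite inductive argument packaged inside a single application of (4).
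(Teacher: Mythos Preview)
Your proposal is correct and follows essentially the same route as the paper: the equivalences $(1)\Leftrightarrow(2)\Leftrightarrow(4)$ are obtained from Proposition~\ref{4.2} with $\mathcal{A}$ the class of simple modules, and for $(1)\Rightarrow(3)$ the paper likewise writes $A$ as the intersection of the maximal summands $\sum_{j\ne i}M_j=\pi^{-1}\bigl(\bigoplus_{j\ne i}S_j\bigr)$ and then invokes $(4)$. The only cosmetic difference is notation: the paper indexes these maximal summands by $(n-1)$-element subsets of $\{1,\dots,n\}$ rather than by the omitted index $i$.
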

\begin{proof}
$(1)\Leftrightarrow (2) \Leftrightarrow (4)$. Follow from Proposition \ref{4.2}.

$(3)\Rightarrow (1)$. Clearly.

$(1)\Rightarrow (3)$. Assume that $M/A$ is a finitely generated semisimple module and isomorphic to a direct summand of $M$. Write  $M/A = M_1/A \oplus \cdots \oplus M_n/A$ with simple submodules $M_i/A$ of $M/A$. Then
$M_i \cap (\sum_{j \ne i} M_j) = A$ for all $i = 1,2 \ldots, n$.  For any subset $\{i_1,i_2,\dots,i_{n-1}\}$ of the set $I:=\{1,2,\dots,n\}$, it is easily to see that $$M/(M_{i_1}+M_{i_2}+\cdots+M_{i_{n-1}})\simeq M_k/A$$ for some $k\in I\setminus \{i_1,i_2,\dots,i_{n-1}\}$. It follows that $M/(M_{i_1}+M_{i_2}+\cdots+M_{i_{n-1}})$ is isomorphic to a simple summand of $M$. By (1), $M_{i_1}+M_{i_2}+\cdots+M_{i_{n-1}}$ is a maximal summand of $M$. On the other hand,  we can check that $$A=\bigcap\limits_{\{i_1,i_2,\dots,i_{n-1}\}\subset I}(M_{i_1}+M_{i_2}+\cdots+M_{i_{n-1}}).$$

So, by (4), $A$ is a direct summand  of $M$.
\end{proof}

\begin{prop}\label{pro:ad2} Let $M$ be  an $\mathcal{A}$-C3 module with  $\mathcal{A}$  a class of right $R$-modules and  closed under isomorphisms and summands.  If $M=A_1\oplus A_2$  and $f:A_1\to A_2$ is a homomorphism with $\Ker(f)\in \mathcal{A}$ and $\Ker(f)\subset_d A_1$, then $\im(f)$ a direct summand of $A_2$.
\end{prop}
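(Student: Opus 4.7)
The plan is to dualize the proof of Proposition~\ref{pro:add1} step by step. Since $\Ker(f)\subset_d A_1$, I would first write $A_1=\Ker(f)\oplus A_1'$ for some submodule $A_1'\leq A_1$; then the restriction $f|_{A_1'}\colon A_1'\to A_2$ is a monomorphism with image equal to $\im(f)$, so in particular $A_1'\cong\im(f)$.

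Next I pass to the direct summand $M':=A_1'\oplus A_2$ of $M$, which inherits the $\mathcal{A}$-C3 property by Lemma~\ref{lem1}. Applying Lemma~\ref{lem:addd} to the map $f|_{A_1'}$ inside $M'$ yields the two direct-sum decompositions
\[
M'=A_1'\oplus A_2=\langle f|_{A_1'}\rangle\oplus A_2,
\]
so both $A_1'$ and $\langle f|_{A_1'}\rangle$ are direct summands of $M'$ with $A_1'\cap\langle f|_{A_1'}\rangle=\Ker(f|_{A_1'})=0$. Since $A_1'\cong\langle f|_{A_1'}\rangle\cong\im(f)$ and $\mathcal{A}$ is closed under isomorphisms and summands, one verifies that these two submodules lie in $\mathcal{A}$; this plays the role, dual to the proof of Proposition~\ref{pro:add1}, of the step ``$M'/M_1,\,M'/\langle f\rangle\in\mathcal{A}$''.

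With this in hand, apply the $\mathcal{A}$-C3 property of $M'$ to the pair $(A_1',\langle f|_{A_1'}\rangle)$ to obtain $A_1'\oplus\langle f|_{A_1'}\rangle\subset_d M'$. A short direct computation (using $a=(a+f(a))-f(a)$) identifies $A_1'+\langle f|_{A_1'}\rangle$ with $A_1'\oplus\im(f)$, giving $A_1'\oplus\im(f)\subset_d M'$. Projecting $M'$ onto $A_2$ along $A_1'$ and a standard modular-law argument then transfer this splitting from $M'$ to $A_2$, yielding $A_2=\im(f)\oplus C$ for some $C\leq A_2$, which is the desired conclusion.

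The main obstacle is justifying that $A_1'$ (equivalently $\langle f|_{A_1'}\rangle$, or $\im(f)$) belongs to $\mathcal{A}$: this is the point at which the hypothesis $\Ker(f)\in\mathcal{A}$ must be combined with the closure of $\mathcal{A}$ under isomorphisms and summands and with the decomposition $A_1=\Ker(f)\oplus A_1'$ in order to make the $\mathcal{A}$-C3 property applicable to the chosen pair of summands of $M'$.
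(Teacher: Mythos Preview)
Your approach is essentially identical to the paper's: write $A_1=\Ker(f)\oplus B$ (your $A_1'$ is the paper's $B$), pass to the summand $B\oplus A_2$, use Lemma~\ref{lem:addd} to get the two decompositions $B\oplus A_2=\langle g\rangle\oplus A_2$ with $g=f|_B$, observe $B\cap\langle g\rangle=0$, apply the $\mathcal{A}$-C3 property to $B$ and $\langle g\rangle$, and finish by rewriting $B\oplus\langle g\rangle=B\oplus\im(f)$.

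The obstacle you single out is real, and the paper does not resolve it either: at the corresponding point the paper simply writes ``Note that $B,\langle g\rangle\in\mathcal{A}$'' with no justification. From the stated hypothesis $\Ker(f)\in\mathcal{A}$ together with closure under isomorphisms and summands, one cannot deduce that a \emph{complement} $B$ of $\Ker(f)$ in $A_1$ lies in $\mathcal{A}$. What would make the argument go through is the hypothesis $A_1\in\mathcal{A}$ (so that $B\subset_d A_1$ gives $B\in\mathcal{A}$, and then $\langle g\rangle\cong B$ gives $\langle g\rangle\in\mathcal{A}$); and indeed, every application of this proposition in the paper (Propositions~\ref{4.5} and the later $V$-ring characterization) is made under the assumption $A_1\in\mathcal{A}$. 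So your proof outline is the paper's proof; the gap you flag appears to be a misstatement of the hypothesis in the proposition itself rather than a deficiency in your argument.
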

\begin{proof}Let  $f: A_1 \rightarrow  A_2$ be an $R$-homomorphism with $\Ker(f)\in \mathcal{A}$. By the hypothesis,  there exists a decomposition   $A_1 = \Ker(f)\oplus  B$ for a  submodule $B $ of $A_1$. Then $B\oplus  A_2$ is a direct summand of $M$. Note that every direct summand of an $\mathcal{A}$-C3 module is also an $\mathcal{A}$-C3 module. Hence   $B\oplus A_2$ is an $\mathcal{A}$-C3 module. Let  $g=f|_B: B\to A_2$. Then $g$ is a monomorphism and  $\im(g)=\im(f)$.   It is easy to see that   $B\oplus A_2=\langle g \rangle \oplus A_2$,    $\langle g \rangle\cap B=0$ and $\langle g \rangle\simeq  B$. Note that $B, \langle g \rangle\in \mathcal{A} $.    As  $B\oplus A_2$ is an $\mathcal{A}$-C3 module,   $B\oplus \langle g \rangle$ is a direct summand of $B\oplus A_2$.  Thus  $B\oplus \langle g \rangle=B\oplus \im(g)$, which implies that  $\im(g)$ or $\im(f)$ is a direct summand of $A_2$.
\end{proof}

\begin{prop}\label{4.5} Let $M$ be  a right $R$-module and $\mathcal{A}$,   a class of right $R$-modules and  closed under isomorphisms and summands.
If every submodule of $M$ is $\mathcal{A}$-projective,  the following conditions are equivalent:
\begin{enumerate}
\item For any two  direct summands $M_1, M_2$ of $M$ such that $M_1, M_2 \in \mathcal{A}$,   $M_1 + M_2$ is a direct summand  of $M$.
\item $M$ is an $\mathcal{A}$-C3 module.
\item For any decomposition $M=A_1\oplus A_2$ with $A_1\in \mathcal{A}$, then every homomorphism $f:A_1\to A_2$ has the image  a direct summand of $A_2$.
\end{enumerate}
\end{prop}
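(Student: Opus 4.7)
My plan is to establish the cycle $(1)\Rightarrow(2)\Rightarrow(3)\Rightarrow(1)$, adapting dual versions of the arguments from Proposition \ref{4.2}. The first implication, $(1)\Rightarrow(2)$, is immediate: for summands $A,B\in\mathcal{A}$ with $A\cap B=0$ the sum $A+B$ equals $A\oplus B$, which is a direct summand by $(1)$.

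For $(2)\Rightarrow(3)$, given a decomposition $M=A_{1}\oplus A_{2}$ with $A_{1}\in\mathcal{A}$ and a homomorphism $f\colon A_{1}\to A_{2}$, I observe that $\im(f)\leq M$ is $\mathcal{A}$-projective by hypothesis, and in particular $A_{1}$-projective since $A_{1}\in\mathcal{A}$. The natural epimorphism $A_{1}\to\im(f)$ therefore splits, so $\Ker(f)$ is a direct summand of $A_{1}$; because $\mathcal{A}$ is closed under summands, $\Ker(f)\in\mathcal{A}$. Proposition \ref{pro:ad2} then applies to yield that $\im(f)$ is a direct summand of $A_{2}$.

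For $(3)\Rightarrow(1)$, let $M_{1},M_{2}\in\mathcal{A}$ be direct summands and write $M=M_{1}\oplus N_{1}$ with projection $\pi\colon M\to N_{1}$. The submodule $L=\pi(M_{2})\leq M$ is $\mathcal{A}$-projective, hence $M_{2}$-projective, so the epimorphism $\pi|_{M_{2}}\colon M_{2}\to L$ splits, producing a decomposition $M_{2}=(M_{1}\cap M_{2})\oplus L'$ with $L'\cong L$. Thus $L'\in\mathcal{A}$ is a direct summand of $M$ satisfying $L'\cap M_{1}=0$, and since $M_{1}+M_{2}=M_{1}\oplus L'$, the task reduces to showing $M_{1}\oplus L'$ is a direct summand of $M$. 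Fixing a complement $M=L'\oplus N^{*}$, Lemma \ref{lem:addd} identifies $M_{1}$ as the graph $\langle\psi\rangle$ of a homomorphism $\psi\colon K\to L'$, where $K=\pi_{N^{*}}(M_{1})$ satisfies $M_{1}\oplus L'=L'\oplus K$. Applying $(3)$ to a suitably constructed homomorphism from $L'$ into $N^{*}$ whose image is $K$—built from the graph identification and the $\mathcal{A}$-projectivity of $K\cong M_{1}$—then shows that $K$ is a direct summand of $N^{*}$, whence $M_{1}\oplus L'$ is a direct summand of $M$, as required.

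The main obstacle is the concrete construction in this final step of $(3)\Rightarrow(1)$: one needs a homomorphism whose image equals $K$ so that $(3)$ cleanly delivers the splitting. The graph identification from Lemma \ref{lem:addd} together with the $\mathcal{A}$-projectivity of the relevant submodules will guide this construction; if a direct application of $(3)$ proves awkward at this point, a fallback is to invoke the $\mathcal{A}$-projectivity hypothesis alone to split $K\cong M_{1}\in\mathcal{A}$ off $N^{*}$, mirroring the $(2)\Rightarrow(1)$ argument one more time and completing the proof of $(1)$.
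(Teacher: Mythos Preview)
Your arguments for $(1)\Rightarrow(2)$ and $(2)\Rightarrow(3)$ are correct and coincide with the paper's proof.

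The gap lies in the final stage of $(3)\Rightarrow(1)$. After reducing to the disjoint pair $M_1,L'\in\mathcal{A}$ and writing $M=L'\oplus N^{*}$, you propose to apply (3) to a homomorphism $L'\to N^{*}$ with image $K=\pi_{N^{*}}(M_1)$. But no such homomorphism need exist: $K\cong M_1$ is in general unrelated to $L'$ (for instance, when $\mathcal{A}$ is the class of simple modules, $K$ and $L'$ may be non-isomorphic simples, so $L'$ cannot surject onto $K$). Your fallback is likewise unjustified: $\mathcal{A}$-projectivity of $K$ means $K$ is projective \emph{relative to modules in $\mathcal{A}$}, and this does nothing to split the inclusion $K\hookrightarrow N^{*}$, since $N^{*}$ need not lie in $\mathcal{A}$ and there is no relevant epimorphism in sight. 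In effect, your reduction has brought you back to the special case $M_1\cap L'=0$ of the very statement you are trying to prove, without a mechanism to finish it from (3).

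The paper avoids this difficulty by applying (3) once, directly, without first reducing to the disjoint case. With $M=N\oplus N'=K\oplus K'$ and $N,K\in\mathcal{A}$, it considers the composite $\pi_{N'}\circ\pi_K|_{N}\colon N\to N'$; since $N\in\mathcal{A}$, condition (3) yields that $A:=\pi_{N'}(\pi_K(N))=(N+K)\cap(N+K')\cap N'$ is a direct summand of $N'$, hence of $M$. Writing $M=A\oplus L$, a modular-law computation then exhibits the explicit complement
\[
M=(N+K)\oplus\bigl[(N+K')\cap N'\cap L\bigr].
\]
The key point is that (3) is invoked with the summand $N$ as domain and a naturally arising composite of projections as the map; one never needs to manufacture a surjection onto a prescribed target.
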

\begin{proof} $(1) \Rightarrow (2)$  is obvious.

$(2) \Rightarrow (3)$  Let  $f: A_1 \rightarrow  A_2$ be an $R$-homomorphism with $A_1\in \mathcal{A}$. By the hypothesis,  $\Ker(f)$ is a direct summand of   $A_1$. The rest of proof is followed from Proposition \ref{pro:ad2}.

$(3) \Rightarrow (1)$  Let $N$ and $K$ be  direct  summands of $M$ such that $N, K \in \mathcal{A}$. Write $M = N\oplus N'$ and $M = K\oplus K'$ for some submodules  $N', K'$ of $M$.    Consider  the canonical projections $\pi_{K}: M\to K$ and  $\pi_{N'}: M\to N'$. Let  $A=\pi_{N'}(\pi_K(N))$.    Then $ A =(N + K)\cap (N +K') \cap N'$  is a direct summand of $M$ by (3).  Write  $M = A\oplus L$ for some  submodule  $L$ of  $M$. Clearly, $$(N + K) \cap  [(N + K') \cap  (N'\cap L)]=0.$$ Hence,   $N'=A\oplus (N'\cap L)$ and $M=(N\oplus A)\oplus (N'\cap L)$. Since $A\leq N+K$ and $A\leq N+K'$, we get  $$N+K=(N\oplus A)\cap [(N+K)\cap (N'\cap L)]$$ and $$N+K'=(N\oplus A)\cap [(N+K')\cap (N'\cap L)].$$
They imply
$$\begin{array}{llll}
M&=N+K'+K\\
&=(N\oplus A)+[(N+K)\cap (N'\cap L)]+[(N+K')\cap (N'\cap L)]\\
&\leq (N+K)+[(N+K')\cap (N'\cap L)].
\end{array}
$$
Thus $M=(N+K)\oplus [(N+K')\cap (N'\cap L)$.
\end{proof}

\begin{prop}\label{4.7} Let $M$ be  a right $R$-module and $\mathcal{A}$, a class of artinian right $R$-modules and  closed under isomorphisms and summands. If every submodule of $M$ is $\mathcal{A}$-projective,  then the following conditions are equivalent:
\begin {enumerate}
\item $M$ is an $\mathcal{A}$-C3 module.
\item Every submodule $N \in \mathcal {A}$ of $M$  that is isomorphic to a
direct summand of $M$ is itself a direct summand.
\item Whenever  $X_1, X_2,\ldots, X_n$ are direct summands of $M$ and $X_1, X_2, \ldots, X_n \in \mathcal {A}$,  then $\sum_{i=1}^nX_i$ is a direct summand of $M$.
\end{enumerate}
\end{prop}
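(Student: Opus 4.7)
The plan is to establish the equivalences cyclically, $(3)\Rightarrow(1)\Rightarrow(2)\Rightarrow(3)$. Throughout I use Proposition \ref{4.5}, whose hypothesis that every submodule of $M$ is $\mathcal{A}$-projective is in force, and the elementary transitivity fact that if $X\leq Y\leq M$ with $X,Y\subset_d M$, then $X\subset_d Y$ (by the modular law).

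$(3)\Rightarrow(1)$ is immediate: taking $n=2$, for $A,B\in\mathcal{A}$ direct summands of $M$ with $A\cap B=0$, condition (3) gives $A+B=A\oplus B\subset_d M$, so $M$ is $\mathcal{A}$-C3.

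For $(1)\Rightarrow(2)$, let $N\in\mathcal{A}$ be a submodule of $M$ isomorphic to a summand $N'$ of $M$. Write $M=N'\oplus N''$, fix an isomorphism $\phi:N'\to N$, and decompose $\phi(n')=\alpha(n')+\beta(n')$ along $N'\oplus N''$, giving homomorphisms $\alpha:N'\to N'$ and $\beta:N'\to N''$. Proposition \ref{4.5} applied to $\beta$ gives $\beta(N')\subset_d N''$; writing $N''=\beta(N')\oplus C$, one finds $N+N'=N'\oplus\beta(N')$ and $M=(N+N')\oplus C$, so $N+N'\subset_d M$. It remains to show $N\subset_d N+N'$. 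Here the artinian hypothesis on $N'\in\mathcal{A}$ is crucial: since an injective endomorphism of an artinian module is an automorphism, a Fitting-type analysis of $\alpha$, together with the graph construction of Lemma \ref{lem:addd} (used to replace $N''$ by another complement $N''_g=\{n''-g(n''):n''\in N''\}$ for a suitably chosen $g:N''\to N'$), reduces to the case in which $\alpha$ is an automorphism. In that case $N=\{\alpha(m')+\beta(m'):m'\in N'\}=\langle\beta\circ\alpha^{-1}\rangle$, and Lemma \ref{lem:addd} delivers $N\oplus\beta(N')=N+N'$, so $N\subset_d M$.

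For $(2)\Rightarrow(3)$, I first show that (2) implies the hom-splitting condition of Proposition \ref{4.5}(3). Given $M=A_1\oplus A_2$ with $A_1\in\mathcal{A}$ and $f:A_1\to A_2$, both $\ker f$ and $f(A_1)$ are submodules of $M$ and therefore $\mathcal{A}$-projective. By $A_1$-projectivity of $f(A_1)$ the sequence $0\to\ker f\to A_1\to f(A_1)\to 0$ splits, so $f(A_1)$ is isomorphic to a direct summand of $A_1\in\mathcal{A}$, hence lies in $\mathcal{A}$ and is isomorphic to a summand of $M$. By (2), $f(A_1)\subset_d M$, and transitivity then yields $f(A_1)\subset_d A_2$. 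Now I prove (3) by induction on $n$, the case $n=1$ being trivial. For the inductive step, set $Y=X_1+\cdots+X_{n-1}$ (a summand by the induction hypothesis), write $M=Y\oplus Y'$, and observe that $Y+X_n=Y\oplus\pi_{Y'}(X_n)$. Since $X_n\in\mathcal{A}$, the same projectivity argument shows that $0\to X_n\cap Y\to X_n\to X_n/(X_n\cap Y)\to 0$ splits, so $\pi_{Y'}(X_n)\cong X_n/(X_n\cap Y)$ is isomorphic to a direct summand of $X_n$, hence lies in $\mathcal{A}$ and is isomorphic to a summand of $M$. By (2), $\pi_{Y'}(X_n)\subset_d M$; transitivity then gives $\pi_{Y'}(X_n)\subset_d Y'$, and therefore $Y+X_n\subset_d M$.

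The main obstacle is the step $(1)\Rightarrow(2)$. Splitting off $\beta(N')$ via Proposition \ref{4.5} is routine, but $\alpha$ need not be injective in general, and controlling it requires the artinian hypothesis in a nontrivial way: through the fact that injective endomorphisms of artinian modules are automorphisms, together with a Fitting-style replacement of the complement $N''$ so as to reduce to the injective case.
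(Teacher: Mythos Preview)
Your implications $(3)\Rightarrow(1)$ and $(2)\Rightarrow(3)$ are fine, and in fact your $(2)\Rightarrow(3)$ is essentially the paper's argument for $(1)\Rightarrow(3)$ rewritten to invoke (2) directly.

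The gap is in $(1)\Rightarrow(2)$. Your reduction ``change the complement $N''$ so that the new $\alpha$ becomes injective'' is not carried out, and the phrase ``Fitting-type analysis'' does not do the work you need. Fitting's lemma requires finite length; here $N'\in\mathcal{A}$ is only artinian. With the substitution $N''\mapsto N''_g$ one gets $\alpha_g=\alpha+g\beta$, and while a suitable $g$ forces $\ker\alpha_g\cap\ker\alpha=0$, it does \emph{not} force $\ker\alpha_g\subsetneq\ker\alpha$, so you obtain neither an injective $\alpha_g$ in one step nor a strictly decreasing chain of kernels to which the artinian hypothesis could be applied. As written, the argument stops at the point you yourself flag as ``the main obstacle''.

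The paper's route is different and genuinely uses the descending chain condition. With $M_1\cong M_2\subset_d M$ and $M=M_2\oplus M_2'$, one projects $M_1$ to $M_2'$; $\mathcal{A}$-projectivity splits $M_1=(M_1\cap M_2)\oplus N_1$. Choosing a summand $N'\leq M_2$ with $N'\cong N_1\cong\pi(M_1)$ and forming the graph $\langle\phi\rangle$ of an isomorphism $\phi:N'\to\pi(M_1)$, one checks $\langle\phi\rangle\in\mathcal{A}$ is a summand of $M$ with $\langle\phi\rangle\cap M_2=0$; $\mathcal{A}$-C3 then yields $M_2\oplus\langle\phi\rangle=M_2\oplus N_1\subset_d M$, so $N_1\subset_d M$. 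Now $M_1\cap M_2\in\mathcal{A}$ is again isomorphic to a summand of $M$, and one repeats. The resulting chain $M_1\supsetneq M_1\cap M_2\supsetneq\cdots$ is strictly decreasing inside the artinian module $M_1$, so it terminates, giving $M_1=N_1\oplus\cdots\oplus N_k$ with each $N_i\in\mathcal{A}$ a summand of $M$. Finally, since every partial sum $N_1\oplus\cdots\oplus N_i$ is a summand of $M_1\in\mathcal{A}$ and hence lies in $\mathcal{A}$, iterated use of the $\mathcal{A}$-C3 condition shows $M_1\subset_d M$. This is the missing idea: an \emph{iterated} peeling off of summands, with termination coming from the descending chain condition on submodules of $M_1$, rather than a single change of complement.
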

\begin{proof}
$(1) \Rightarrow (2)$. Let $M_1$ be submodule of $M$ and  isomorphic to a direct summand $M_2$ of $M$ and $M_1 \in \mathcal {A}$. Then $M = M_2 \oplus M_2'$. If $M_1 \subset M_2$, then by $M_2$ is artinian and $M_1 \cong M_2$, implies that $M_1 = M_2$. Let $M_1 \nsubseteq M_2$ and $\pi : M_2 \oplus M_2'\rightarrow M_2'$ be projection. According to the hypothesis, $\Ker (\pi_ {\mid M_1})$ is a direct summand of $M_1$. It follows that  $M_1 = M_1 \cap M_2 \oplus N_1$. Since $N_1 \cong \pi (M_1)$, $M_1 \cong M_2$, then there is an isomorphism $\phi: N '\rightarrow \pi(M_1)$, where $N '$ is a direct summand of $M_1$. Since $ \langle\phi\rangle \in \mathcal {A}$ and $\langle\phi\rangle \cap M_2 = 0$, $M_2 + \langle\phi\rangle = M_2 \oplus N_1$ is a direct summand of $M$. Therefore, $N_1$ is a non-zero direct summand of $M$. It is clear that $M_1 \cap M_2 \in \mathcal {A}$ and $M_1 \cap M_2$ is isomorphic to a direct summand of $M$. If $M_1 \cap M_2$ is not a direct summand of $M$, by using a argument that are similar to the argument presented above, we can show that $M_1 \cap M_2 = N_2 \oplus N_2'$, where $N_2\in \mathcal {A}$ is a non-zero direct summand of $M$ and  $N_2'\in \mathcal {A}$ is a submodule of $M$  isomorphic to a direct summand of $M$. Since each module of the class $\mathcal {A}$ is artinian, by conducting similar constructions continue for some $k$, we obtain a decomposition $M_1 = N_1 \oplus \ldots \oplus N_k$, where $N_i$ is a direct summand of $M$ and $N_i \in \mathcal{A}$ for each $i$. Since $M$ is an $\mathcal{A}$-C3 module, $N_1 \oplus N_2\oplus  \ldots \oplus N_k$ is a direct summand of $M$.

$(2)\Rightarrow(1)$. It is obvious.

$(1) \Rightarrow (3)$.
We prove this by induction on $n$.
When $ n = 2 $,  the assertion follows from Proposition \ref{4.5}. Suppose that the assertion is true for $n = k$. Let $X_1, X_2,\ldots, X_{k+1}$ be summands of $M$ and $X_1, X_2,\ldots, X_{k+1} \in \mathcal {A}$. Then there exists a  submodule $N$ of $M$ such that  $M=(\sum_{i=1}^kX_i)\oplus N$. Let $\pi: (\sum_{i=1}^kX_i)\oplus N\rightarrow N$ be the natural projection. As $\pi(X_{k+1})$ is  $\mathcal{A}$-projective, then $X_{k+1}=((\sum_{i=1}^kX_i)\cap X_{k+1}) \oplus S$  for some submodule $S$ of  $M$. Since the equivalence of (1) and (2), $\pi(X_{k+1})$ is a direct summand of $M$ and, therefore, $N=\pi(X_{k+1})\oplus T$ with  $T$ a submodule $M.$ It follows that $\sum_{i=1}^{k+1}X_i=(\sum_{i=1}^k X_i)\oplus \pi(X_{k+1})$ and $M=(\sum_{i=1}^kX_i)\oplus\pi(X_{k+1})\oplus T.$
Thus, $\sum_{i=1}^{k+1}X_i$ is a direct summand of $M.$
\end{proof}

\begin{rem}  Let $F$ be any nonzero free module over $\mathbb{Z}$ and $\mathcal{A}$,  a class of all free
$\mathbb{Z}$-modules. It is well known that $F$ is a quasi-continuous module and $F$ is not a continuous module. Thus, $F$ is an $\mathcal{A}$-C3 module and satisfies the property: there exists a submodule $N\in \mathcal {A}$ of $F$   that is isomorphic to a
direct summand of $F$ is not a direct summand.
\end{rem}

\begin{prop}\label{4.6} Let $M$ be  a right $R$-module and $\mathcal{A}$,  a class of right $R$-modules and  closed under isomorphisms and summands.
If every factor module  of $M$ is $\mathcal{A}$-projective,  then the following conditions are equivalent:
\begin{enumerate}
\item For any two direct summands $M_1, M_2$ of $M$ such that $M_1, M_2 \in \mathcal{A}$,   $M_1 + M_2$ is a direct summand  of $M$.
\item $M$ is an $\mathcal{A}$-C3 module.
\item For any decomposition $M=A_1\oplus A_2$ with $A_1\in \mathcal{A}$, then every homomorphism $f:A_1\to A_2$ has the image  a direct summand of $A_2$.
\item Every submodule $N \in \mathcal {A}$ of $M$  that is isomorphic to a
direct summand of $M$ is itself a direct summand.
\item Whenever $X_1, X_2, \ldots, X_n$ are direct summands of $M$ and $X_1, X_2, \ldots, X_n \in \mathcal {A}$,  then $\sum_{i=1}^nX_i$ is a direct summand of $M$.
\end{enumerate}

\end{prop}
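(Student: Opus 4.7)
The plan is to transfer $(1)\Leftrightarrow(2)\Leftrightarrow(3)$ from Proposition~\ref{4.5}, adapted to the factor-projectivity hypothesis, and then close the loop through $(4)$ and $(5)$. For $(1)\Leftrightarrow(2)\Leftrightarrow(3)$ I would reuse the proof of Proposition~\ref{4.5} verbatim, changing only the step in $(2)\Rightarrow(3)$ that requires $\Ker(f)\subset_d A_1$ for $f\colon A_1\to A_2$ (with $M=A_1\oplus A_2$ and $A_1\in\mathcal{A}$). Instead of submodule-projectivity, I would observe that $\im(f)\cong A_1/\Ker(f)\cong M/(\Ker(f)\oplus A_2)$ is a factor of $M$, hence $\mathcal{A}$-projective, hence $A_1$-projective; then $A_1\twoheadrightarrow\im(f)$ splits and $\Ker(f)\subset_d A_1$, after which Proposition~\ref{pro:ad2} completes $(2)\Rightarrow(3)$ and the rest of Proposition~\ref{4.5} transfers unchanged.

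For $(4)\Rightarrow(1)$ I would take direct summands $A,B\in\mathcal{A}$ of $M$ with $A\cap B=0$, write $M=B\oplus B'$, and set $A'=\pi_{B'}(A)$. Since $\pi_{B'}|_A$ is injective, $A'\cong A\in\mathcal{A}$, so $(4)$ makes $A'$ a direct summand of $M$; a modular-law manipulation then identifies $A+B$ with $A'\oplus B$ and displays it as a direct summand of $M$. The implication $(5)\Rightarrow(1)$ is the case $n=2$.

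The main obstacle is $(2)\Rightarrow(4)$. Given $N\in\mathcal{A}$ and an isomorphism $\psi\colon N_1\xrightarrow{\sim}N$ with $N_1\subset_d M$, I would write $M=N_1\oplus N_1'$ and $\psi=h+g$ for $h\colon N_1\to N_1$ and $g\colon N_1\to N_1'$. The factor modules $\im(h)\cong M/(\Ker(h)\oplus N_1')$ and $N_1/\im(h)\cong M/(\im(h)\oplus N_1')$ of $M$ are both $N_1$-projective by factor-projectivity, so the epimorphisms $N_1\twoheadrightarrow\im(h)$ and $N_1\twoheadrightarrow N_1/\im(h)$ both split; this yields $N_1=\Ker(h)\oplus N_1^{*}$ with $h|_{N_1^{*}}\colon N_1^{*}\xrightarrow{\sim}\im(h)$, and also $N_1=\im(h)\oplus D$. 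The first splitting gives $N=\psi(\Ker(h))\oplus\psi(N_1^{*})$, and I would show each summand is a direct summand of $M$. The piece $\psi(\Ker(h))=g(\Ker(h))$ is the image of $g|_{\Ker(h)}\colon\Ker(h)\to N_1^{*}\oplus N_1'$, and since $\Ker(h)\in\mathcal{A}$ is a direct summand of $M$, $(3)$ makes $g(\Ker(h))$ a direct summand of $M$. Reparametrizing $\psi(N_1^{*})$ by $y=h(z)$ for $z\in N_1^{*}$ presents it as the graph in $\im(h)\oplus N_1'$ of $g\circ(h|_{N_1^{*}})^{-1}\colon\im(h)\to N_1'$, so Lemma~\ref{lem:addd} makes $\psi(N_1^{*})$ a direct summand of $\im(h)\oplus N_1'$; in turn, the second splitting realizes $\im(h)\oplus N_1'$ as a direct summand of $M$, so $\psi(N_1^{*})\subset_d M$. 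Both summands lie in $\mathcal{A}$ by closure under summands and isomorphisms, and are disjoint, so $(1)$ combines them into $N=\psi(\Ker(h))\oplus\psi(N_1^{*})\subset_d M$.

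Finally, $(4)\Rightarrow(5)$ I would prove by induction on $n$, in the style of Proposition~\ref{4.7}: write $M=(X_1+\cdots+X_k)\oplus N$ via the induction hypothesis and let $\pi$ be the projection onto $N$; then $\pi(X_{k+1})\cong X_{k+1}/(X_{k+1}\cap(X_1+\cdots+X_k))$ is a factor of $M$ and hence $\mathcal{A}$-projective, in particular $X_{k+1}$-projective, so $X_{k+1}\twoheadrightarrow\pi(X_{k+1})$ splits, realizing $\pi(X_{k+1})$ as a member of $\mathcal{A}$ isomorphic to a direct summand of $M$; $(4)$ then promotes $\pi(X_{k+1})$ to a direct summand of $M$, and $X_1+\cdots+X_{k+1}=(X_1+\cdots+X_k)\oplus\pi(X_{k+1})$ is a direct summand.
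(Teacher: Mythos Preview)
Your argument is correct, and for $(1)\Leftrightarrow(2)\Leftrightarrow(3)$ and the induction toward $(5)$ it matches the paper's treatment (the paper simply refers back to Propositions~\ref{4.5} and~\ref{4.7}, with the same factor-projectivity substitution you describe). One cosmetic point: what you label $(4)\Rightarrow(1)$ only checks the case $A\cap B=0$, so it is really $(4)\Rightarrow(2)$; this is harmless since you have already closed the loop $(2)\Rightarrow(3)\Rightarrow(1)$, and the paper likewise records $(4)\Rightarrow(2)$ as the easy direction.

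The substantive difference is in the hard implication toward $(4)$. The paper proves $(3)\Rightarrow(4)$ by splitting the \emph{target}: given $\sigma\colon A\xrightarrow{\sim}B$ with $A\subset_d M$, it writes $M=A\oplus T$, uses factor-projectivity to split $A=(A\cap B)\oplus C$, hence $B=(A\cap B)\oplus[(C\oplus T)\cap B]$, and then pulls $(C\oplus T)\cap B$ back through $\sigma^{-1}$ to a summand $H$ of $A$; a single application of $(3)$ to the composite $H\xrightarrow{\sigma}(C\oplus T)\cap B\xrightarrow{\pi}H'\oplus T$ produces the missing complement of $B$ in one stroke. Your route instead splits the \emph{map}: you decompose $\psi=h+g$ along $M=N_1\oplus N_1'$, use factor-projectivity twice to split off both $\Ker(h)$ and $\im(h)$ inside $N_1$, exhibit $\psi(\Ker(h))$ and $\psi(N_1^{*})$ separately as summands (via $(3)$ and a graph argument), and then glue them with $(2)$. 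Both work; the paper's version is a bit more economical (one splitting, one use of $(3)$, no final gluing), while yours is more modular and makes the role of the component maps $h,g$ transparent.
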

\begin{proof} $(1) \Rightarrow (2)$  is obvious.

$(2) \Rightarrow (3) \Rightarrow (1)$ are  proved similarly to the argument proof of Proposition \ref{4.5}.

$(4) \Rightarrow (2)$ is obvious.

$(3) \Rightarrow (4)$. Let $\sigma: A\to B$ be an isomorphism with $A\in \mathcal{A}$ a summand of $M$ and $B\leq M$. We need to show that $B$ is a direct summand of $M$.
Write  $M=A\oplus T$ for some submodule $T$ of $M$. We have  $A/A\cap B$ is an image of $M$ and obtain that   $A\cap B$ is a direct summand of $A$.   Take  $A=(A\cap B)\oplus C$ for  some submodule $C$ of $A$. Now  $M=(A\cap B)\oplus (C\oplus T)$. Clearly,   $A\cap [(C\oplus T)\cap B]=0$  and  $B=(A\cap B)\oplus [(C\oplus T)\cap B]$. Let $H:=\sigma^{-1}((C\oplus T)\cap B)$. Then $H$ is a submodule of  $A$, $H\cap [(C\oplus T)\cap B]=0$ and  $A=H\oplus H'$ for some  submodule $H'$ of $H$. Note that  $M=H\oplus (H'\oplus T)$. Consider the projection  $\pi: M\to H'\oplus T$. Then $$H\oplus [(C\oplus T)\cap B]=H\oplus \pi((C\oplus T)\cap B).$$
By (3), the image of the homomorphism $\pi|_{(C\oplus T)\cap B}\circ\sigma|_{H}: H\to H'\oplus T$ is a direct summand of $H'\oplus T$ since $H$ is contained in $\mathcal{A}$. Write $H'\oplus T=\pi|_{(C\oplus T)\cap B}\sigma(H)\oplus K$ for some  submodule $K$ of $H'\oplus T$.  Then  $H'\oplus T=\pi((C\oplus T)\cap B)\oplus K$. It follows that $$M=H\oplus \pi((C\oplus T)\cap B)\oplus K=H\oplus [(C\oplus T)\cap B]\oplus K.$$
By the modular law,  $C\oplus T=[(C\oplus T)\cap B]\oplus [(H\oplus K)\cap (C\oplus T)]$. Thus $$\begin{array}{lll}
M&=(A\cap B)\oplus [(C\oplus T)\cap B]\oplus [(H\oplus K)\cap (C\oplus T)] \\
&=B\oplus [(H\oplus K)\cap (C\oplus T)].
\end{array}$$

The implication $(1) \Rightarrow (5)$ is proved similarly to the argument proof of Proposition \ref{4.7}.
\end{proof}

\begin{cor} The following conditions are equivalent for a module $M$:
\begin{enumerate}
\item For any semisimple submodules $A$, $B$ of $M$ with $A\cong B\subset_d M$, $A \subset_d M$.
\item For any semisimple summands $A, B$ of $M,$  $A+ B\subset_d M$.
\item For any semisimple summands $A, B$ of $M$ with $A\cap B=0,$ $A+ B\subset_d M$.
\item Whenever  $X_1,  \ldots, X_n$ are semisimple summands of $M$ and $X_1,  \ldots, X_n \in \mathcal {A}$,  then $\sum_{i=1}^nX_i$ is a direct summand of $M$.
\end{enumerate}
\end{cor}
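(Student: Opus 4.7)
The plan is to derive this corollary from Proposition \ref{4.6} by specializing the class $\mathcal{A}$ to the class of all semisimple right $R$-modules. Since the semisimple class is obviously closed under isomorphisms and under direct summands, the two structural hypotheses on $\mathcal{A}$ imposed by that proposition are immediate. Once the remaining hypothesis on $M$ is verified, the four conditions of the corollary will be read off as special cases of conditions (4), (1), (2), and (5) of Proposition \ref{4.6}, respectively.

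The only genuine verification I need is that every factor module of $M$ is $\mathcal{A}$-projective when $\mathcal{A}$ is the class of semisimple modules, and this is actually automatic for every $R$-module, not just for factors of $M$. The argument I would give is: for any semisimple $N$ and any epimorphism $f\colon N\to K$, the kernel $\Ker(f)$ is a direct summand of $N$, so $f$ splits. Hence, given a homomorphism $g\colon X\to K$ from any module $X$, a lift $h\colon X\to N$ with $fh=g$ is obtained by composing $g$ with a splitting of $f$. Thus every module is $N$-projective for every semisimple $N$, so in particular every factor of $M$ is $\mathcal{A}$-projective.

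With the hypotheses of Proposition \ref{4.6} in force, I would then match the statements. Condition (1) of the corollary is exactly condition (4) of Proposition \ref{4.6} restricted to semisimple $N$. Condition (2) of the corollary is condition (1) of Proposition \ref{4.6}. Condition (3), which requires $A\cap B=0$ for semisimple summands $A,B$, is precisely the definition of $M$ being $\mathcal{A}$-C3, i.e.\ condition (2) of Proposition \ref{4.6}. Finally, condition (4) of the corollary is condition (5) of Proposition \ref{4.6} for finite families of semisimple summands. Here one should note the slightly unusual phrasing $X_i\in\mathcal{A}$ in the statement refers to these summands being semisimple.

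I do not expect any real obstacle: the content is entirely packed into Proposition \ref{4.6}, and the only thing that could go wrong is the relative-projectivity hypothesis, which is handled by the splitting argument above. The main care point while writing will be to phrase the implications cleanly so that the reader sees exactly which condition of Proposition \ref{4.6} each clause corresponds to, rather than re-deriving any of the equivalences by hand.
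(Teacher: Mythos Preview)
Your proposal is correct and matches the paper's intended approach: the corollary is stated immediately after Proposition~\ref{4.6} with no separate proof, so it is meant to be read off from that proposition with $\mathcal{A}$ the class of semisimple modules. Your verification that every module is $N$-projective for semisimple $N$ (via splitting of epimorphisms from semisimple modules) is exactly the point that makes the hypothesis of Proposition~\ref{4.6} automatic, and your identification of conditions (1)--(4) of the corollary with conditions (4), (1), (2), (5) of Proposition~\ref{4.6} is accurate.
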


\begin{cor} Let $Q$ be a quasi-injective module. If $X_1,\ldots, X_n$ are semisimple summands of $Q$, then $\sum_{i=1}^nX_i$ is a direct summand of $Q$.
\end{cor}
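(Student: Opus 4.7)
The plan is to deduce this corollary as a direct specialization of Proposition \ref{4.6} to the class $\mathcal{A}$ of all semisimple right $R$-modules, which is evidently closed under isomorphism and under direct summands; with this choice, the semisimple summands of $Q$ are exactly the direct summands of $Q$ lying in $\mathcal{A}$, so the desired conclusion is precisely condition $(5)$ of that proposition.

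The first thing I would verify is that every factor module of $Q$ is $\mathcal{A}$-projective. This turns out to be automatic for this $\mathcal{A}$: for any semisimple $S$ and any epimorphism $f \colon S \to K$, the kernel of $f$ is a direct summand of $S$, so $f$ splits, and any homomorphism from a factor of $Q$ into $K$ lifts along a section of $f$. Hence every module, and in particular every factor of $Q$, is $\mathcal{A}$-projective, so the standing hypothesis of Proposition \ref{4.6} is met.

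The second thing to verify is that $Q$ itself is an $\mathcal{A}$-C3 module. Since $Q$ is quasi-injective it satisfies condition C2, and the standard argument shows that C2 implies C3: given summands $A, B$ of $Q$ with $A \cap B = 0$, projection onto a complement of $A$ identifies $B$ with a submodule isomorphic to the summand $B$, which by C2 is again a summand, from which $A \oplus B$ is easily assembled as a summand of $Q$. Thus $Q$ is a C3 module, and by item $(1)$ of the opening remark of Section~2 it is $\mathcal{A}$-C3 for every class $\mathcal{A}$.

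With both hypotheses of Proposition \ref{4.6} in place, the implication $(2) \Rightarrow (5)$ applied to the semisimple direct summands $X_1, \ldots, X_n$ of $Q$ yields that $\sum_{i=1}^n X_i$ is a direct summand of $Q$, which is the desired conclusion. The only step requiring any real thought is the automatic $\mathcal{A}$-projectivity observed in the first verification; it is what lets Proposition \ref{4.6} be invoked without any further artinian or projectivity hypotheses beyond quasi-injectivity of $Q$.
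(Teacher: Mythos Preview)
Your proposal is correct and matches the paper's intended argument: the corollary is placed immediately after the specialization of Proposition~\ref{4.6} to the class $\mathcal{A}$ of semisimple modules, and your proof is exactly that specialization applied to a quasi-injective $Q$, verifying the $\mathcal{A}$-projectivity hypothesis via the splitting of epimorphisms out of semisimple modules and verifying condition~(2) via the implication quasi-injective $\Rightarrow$ C2 $\Rightarrow$ C3 $\Rightarrow$ $\mathcal{A}$-C3. The only cosmetic difference is that the paper would more naturally verify condition~(1) of the preceding corollary (the C2 property restricted to semisimple submodules) directly from quasi-injectivity, rather than passing through C3, but this is the same content.
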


\begin{cor}[{\cite[Proposition 2.1]{CIYZ}}] The following conditions are equivalent for a module $M$:
\begin{enumerate}
\item For any simple submodules $A$, $B$ of $M$ with $A\cong B\subset_d M$, $A \subset_d M$.
\item For any simple summands $A, B$ of $M$ with $A\cap B=0,$  $A\oplus B\subset_d M$.
\item For any finitely generated semisimple submodules $A$, $B$ of $M$ with $A\cong B\subset_d M$, $A \subset_d M$.
\item For any finitely generated semisimple summands $A, B$ of $M$ with $A\cap B=0,$  $A\oplus B\subset_d M$.
\end{enumerate}
\end{cor}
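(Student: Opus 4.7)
The proof splits into three parts. The trivial implications $(3)\Rightarrow(1)$ and $(4)\Rightarrow(2)$ are immediate, since a simple module is finitely generated semisimple of length one, and the equivalence $(1)\Leftrightarrow(2)$ is the characterization of simple-direct-injective modules recorded in \cite[Proposition~2.1]{CIYZ} (one can also derive it directly by the projection argument: if $A,B$ are simple summands with $M=B\oplus B'$ and $A\cap B=0$, the projection of $A$ to $B'$ is a simple submodule of $M$ isomorphic to $A$, and the combination of the two formulations follows from Lemma~\ref{lem:addd}). This reduces everything to $(2)\Rightarrow(4)$ and $(1)\Rightarrow(3)$, for both of which I plan to extract a single key lemma and apply it.

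The key lemma is: if $M$ satisfies $(1)$ (equivalently $(2)$) and $N_1,\dots,N_k$ are simple direct summands of $M$ whose sum is direct, then $N_1\oplus\cdots\oplus N_k\subset_d M$. I would prove it by induction on $k$; the cases $k\le 2$ are immediate from $(2)$. For $k\ge 3$, the induction hypothesis produces a complement $W$ with $M=(N_1\oplus\cdots\oplus N_{k-1})\oplus W$. Let $\pi:M\to W$ be the associated projection. Since $N_k\cap(N_1\oplus\cdots\oplus N_{k-1})=0$, the restriction $\pi|_{N_k}$ is a monomorphism, so $\pi(N_k)$ is a simple submodule of $M$ isomorphic to the simple summand $N_k$. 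Condition $(1)$ now yields $\pi(N_k)\subset_d M$, and because $\pi(N_k)\subset W\subset_d M$ the modular law gives $\pi(N_k)\subset_d W$. Writing $W=\pi(N_k)\oplus W'$, and noting that $(N_1\oplus\cdots\oplus N_{k-1})+\pi(N_k)=N_1\oplus\cdots\oplus N_k$ (since $n_k=(n_k-\pi(n_k))+\pi(n_k)$ lies in the left-hand side), we obtain $M=(N_1\oplus\cdots\oplus N_k)\oplus W'$, closing the induction.

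Granting the lemma, the remaining implications are short. For $(2)\Rightarrow(4)$, decompose the finitely generated semisimple summands as $A=A_1\oplus\cdots\oplus A_n$ and $B=B_1\oplus\cdots\oplus B_m$ into simple summands of $M$ (a simple summand of a summand of $M$ is a summand of $M$); since $A\cap B=0$, the full collection $\{A_i\}\cup\{B_j\}$ sums directly, and the lemma delivers $A\oplus B\subset_d M$. For $(1)\Rightarrow(3)$, given $A$ finitely generated semisimple with $A\cong B\subset_d M$, decompose $B=B_1\oplus\cdots\oplus B_n$ into simple summands and transport the decomposition via the isomorphism to $A=A_1\oplus\cdots\oplus A_n$ with $A_i\cong B_i$ simple; each $B_i\subset_d M$, so $(1)$ gives $A_i\subset_d M$, and the lemma applied to $\{A_1,\dots,A_n\}$ yields $A\subset_d M$.

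The main obstacle is the inductive step in the lemma: condition $(2)$ only handles pairs of \emph{simple} summands, while the partial sum $N_1\oplus\cdots\oplus N_{k-1}$ ceases to be simple once $k\ge 3$, so a naive attempt to append $N_k$ stalls. The projection trick sidesteps this by replacing $N_k$ with the new simple submodule $\pi(N_k)$ of $M$, reducing the inductive step to another application of the simple-submodule hypothesis $(1)$ and thereby avoiding the need for any ``simple plus semisimple summand'' strengthening of $(2)$.
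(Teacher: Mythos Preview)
Your argument is correct. The key lemma you isolate---that under (1) a finite independent family of simple summands has a sum that is a summand---is precisely condition~(5) of Proposition~\ref{4.6} (equivalently condition~(3) of Proposition~\ref{4.7}) specialised to the class $\mathcal{A}$ of simple modules, and your inductive projection proof of it is the same argument the paper gives for $(1)\Rightarrow(3)$ in Proposition~\ref{4.7}. The paper obtains the corollary by applying Proposition~\ref{4.6} (the hypothesis ``every factor module is $\mathcal{A}$-projective'' is automatic since every module is projective relative to semisimple modules), so the underlying mechanism is identical; you have simply unpacked that proposition in the special case rather than invoking it.

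One small remark: citing \cite[Proposition~2.1]{CIYZ} for $(1)\Leftrightarrow(2)$ is circular here, since the corollary \emph{is} that proposition and the point is to recover it from the paper's own results. Your parenthetical direct argument via Lemma~\ref{lem:addd} (which is exactly how Proposition~\ref{4.6} handles $(2)\Leftrightarrow(4)$) already covers this, so simply drop the external citation and keep the projection argument.
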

\section{Characterizations of rings}

\begin{lem}\label{lem:criter} Let  $\mathcal{A}$ be a class of  right $R$-modules with local endomorphisms  and  closed under isomorphisms.   Assume that $K$ and $M$ are indecomposable right $R$-modules and  not  contained in  $\mathcal{A}$. Then
\begin{enumerate}
\item $N=M\oplus P$ is an  $\mathcal{A}$-D3 module for all
projective modules $P$.
\item $N=M\oplus E$ is an  $\mathcal{A}$-C3 module for all
injective modules $E$.
\item $N=M\oplus K$ is an $\mathcal{A}$-D3 module and
an $\mathcal{A}$-C3 module.
\end{enumerate}
\end{lem}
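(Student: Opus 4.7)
The plan is to reduce all three parts to a single exchange-theoretic observation: if $X \in \mathcal{A}$ is a direct summand of $N = Y \oplus Z$, then $X$ has a local endomorphism ring, so by the classical theorem of Warfield, $X$ is indecomposable and has the finite exchange property. Applying finite exchange to $N = X \oplus X' = Y \oplus Z$ yields decompositions $Y = Y_1 \oplus Y_2$ and $Z = Z_1 \oplus Z_2$ with $N = X \oplus Y_1 \oplus Z_1$. Comparing the two resulting presentations of $N/(Y_1 \oplus Z_1)$ gives $X \cong Y_2 \oplus Z_2$, and indecomposability of $X$ forces $Y_2 = 0$ or $Z_2 = 0$. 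Hence $X$ is, up to isomorphism, a direct summand of $Y$ or of $Z$; when one of $Y, Z$ is itself indecomposable and not in $\mathcal{A}$, the corresponding alternative collapses, forcing $X$ into the other factor or to zero.

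For parts (2) and (3) (the $\mathcal{A}$-C3 statements), I would apply this observation to any summand $A \in \mathcal{A}$ of $N$, taking $Y = M$. The alternative in which $A$ sits in $M$ gives $A \cong M$, contradicting $M \notin \mathcal{A}$ unless $A = 0$. In case (2) with $Z = E$ injective, the remaining alternative makes $A$ isomorphic to a direct summand of $E$, hence injective; so for any two such summands $A, B$ of $N$ with $A \cap B = 0$, the module $A \oplus B$ is injective and is automatically a direct summand of $N$. In case (3) with $Z = K$ also indecomposable and outside $\mathcal{A}$, the same dichotomy applied on the $Z$ side collapses as well, forcing $A = 0$, so the $\mathcal{A}$-C3 condition holds vacuously.

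For part (1) and the $\mathcal{A}$-D3 aspect of (3), I would start from direct summands $N_1, N_2$ of $N$ with $N/N_1, N/N_2 \in \mathcal{A}$ and $N = N_1 + N_2$, writing $N = N_i \oplus L_i$ so that $L_i \cong N/N_i \in \mathcal{A}$. Applying the observation to $X = L_i$ with $Y = M$ places each $L_i$ as an isomorph of a summand of the second factor: of $P$ in (1), so each $L_i$ is projective; of $K$ in (3), so each $L_i$ is zero. The hypothesis $N = N_1 + N_2$ produces the standard isomorphism $N/(N_1 \cap N_2) \cong N/N_1 \oplus N/N_2 \cong L_1 \oplus L_2$, so this quotient is projective (respectively zero). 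The short exact sequence
\[
0 \longrightarrow N_1 \cap N_2 \longrightarrow N \longrightarrow N/(N_1 \cap N_2) \longrightarrow 0
\]
then splits, and $N_1 \cap N_2$ is a direct summand of $N$ in both cases.

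The main obstacle I anticipate is running the exchange argument cleanly, specifically extracting the isomorphism $X \cong Y_2 \oplus Z_2$ from the two decompositions of $N$ and verifying the summand properties of $Y_i$ and $Z_i$. Once that step is secured, the indecomposability of $M$ and $K$ together with their exclusion from $\mathcal{A}$ and elementary facts about direct summands of projective and injective modules do the rest, and the three conclusions drop out uniformly from the same template.
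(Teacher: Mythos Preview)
Your proposal is correct and follows essentially the same route as the paper: both invoke the finite exchange property of a module with local endomorphism ring (the paper via \cite[Lemma~26.4]{AF}, you via Warfield) to force any $\mathcal{A}$-summand of $N$ to be isomorphic to a direct summand of the second factor $P$, $E$, or $K$, and then finish using projectivity, injectivity, or indecomposability of that factor. The only cosmetic difference is that the paper packages parts (1) and (2) as the stronger D2-/C2-type statements (if $N/A$, respectively $A$, is isomorphic to an $\mathcal{A}$-summand then $A$ is itself a summand), while you verify the D3/C3 conditions directly; the substance is identical.
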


\begin{proof}
$(1)$ Let $N/A\cong S\subset_d N$ with $S\in  \mathcal{A}$. By \cite[%
Lemma 26.4]{AF}, there exist a direct summand $M_{1}$ of $M$ and a direct
summand $P_{1}$ of $P$ such that $N=S\oplus M_{1}\oplus P_{1}$. Write $%
P=P_{1}\oplus P_{2}$ for some submodule $P_{2}$ of $P$. Since $M$ is an
indecomposable module, we have either $M_{1}=0$ or $M=M_{1}$. If $M_{1}=0$,
then $N=S\oplus P_{1}=(M\oplus P_{2})\oplus P_{1}$ and it follows that $%
M\oplus P_{2}\cong S$, and hence $M\in  \mathcal{A}$ contradicting. So $M_{1}=M$. Then $N=S\oplus
(M\oplus P_{1})=(M\oplus P_{1})\oplus P_{2}$. This gives $S\cong P_{2},$ and
consequently $N/A\cong S$ is projective. Hence, $A$ is a direct summand of $%
N $ and $(1)$ holds.

$(2)$   Suppose  that $A$ is a submodule of $N$ such that $A\simeq S$ with $S$ a submodule of $N$ and  $S \in \mathcal{A}$ .
As in $(1),$ we see that $N=S\oplus M_{1}\oplus E_{1}$ with $M=M_{1}\oplus
M_{2}$ and $E=E_{1}\oplus E_{2}$. Also, as in (1), $M_{1}=M$. Therefore,
\begin{equation*}
N=S\oplus M\oplus E_{1}=M\oplus E=(M\oplus E_{1})\oplus E_{2}.
\end{equation*}
It follows that $S\simeq E_2$ is an injective module. Thus $A$ is a direct summand of $N$.

$(3)$ We show that $N$ has no a nonzero  direct summand $S$ with $S\in \mathcal{A}$. Assume on
the contrary that there exists a non-zero summand $S\subset_d  N$
with $S\in \mathcal{A}$. As, in $(1),$ $N=S\oplus M_{1}\oplus K_{1}$ with $%
M=M_{1}\oplus M_{2}$ and $K=K_{1}\oplus K_{2}$. Also, as in (1), $M_{1}=M$.
Therefore,
\begin{equation*}
N=S\oplus M\oplus K_{1}=M\oplus K.
\end{equation*}%
Since $K$ is indecomposable, $K=K_{1}$ or $K=K_{2}.$ If $K=K_{1},$ then $%
S\oplus M\oplus K=M\oplus K$ and consequently $S=0,$ a contradiction. If $%
K=K_{2},$ then $K_{1}=0$ and so $S\oplus M=M\oplus K.$ Therefore, $K\cong S$
and hence $K\in  \mathcal{A}$, a contradiction.
\end{proof}
Recall that a module is \textit{uniserial} if the lattice of its submodules
is totally ordered under inclusion. A ring $R$ is called right \textit{%
uniserial} if $R_R$ is a uniserial module. A ring $R$ is called \textit{%
serial} if both modules $_{R}R$ and $R_{R}$ are direct sums of uniserial
modules.
\begin{thm}\label{thm:4.2q}  Let $R$ be a right artinian ring and  $\mathcal{A}$,  a class of  right $R$-modules with local endomorphisms,  containing  all right simple right $R$-modules and  closed under isomorphisms.  If all right $R$-modules are $\mathcal{A}$-injective,  then the following conditions are equivalent for a ring $R$:
\begin{enumerate}
\item $R$ is a serial artinian ring with $J^{2}(R)=0$.
\item Every $\mathcal{A}$-C3 module is quasi-injective.
\item Every $\mathcal{A}$-C3 module is $C3$.
\end{enumerate}
\end{thm}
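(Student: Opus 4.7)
The plan is to prove $(2)\Rightarrow(3)\Rightarrow(1)\Rightarrow(2)$. The implication $(2)\Rightarrow(3)$ is standard: every quasi-injective module satisfies C2, and a short projection-and-splitting argument shows C2 implies C3.

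For $(3)\Rightarrow(1)$, I would first unpack the hypothesis. The condition ``every right $R$-module is $\mathcal{A}$-injective'' forces each $N\in\mathcal{A}$ to be semisimple (apply the lifting property to an inclusion $K\hookrightarrow N$ with $M=K$ and $f=\mathrm{id}_K$), and the local-endomorphism assumption then pins $N$ down to being simple. Thus $\mathcal{A}$ equals the class of all simple right $R$-modules and $\mathcal{A}$-C3 coincides with simple-direct-injective. The engine is then Lemma~\ref{lem:criter}(2): for any indecomposable non-simple $M$, the module $M\oplus E(M)$ is $\mathcal{A}$-C3, and hence C3 by (3). Feeding into C3 the summand $M$ and the graph $\langle i\rangle$ of $i\colon M\hookrightarrow E(M)$ (a summand of $M\oplus E(M)$ by Lemma~\ref{lem:addd}, with $M\cap\langle i\rangle=\Ker(i)=0$), one finds $M\oplus i(M)$ is a summand of $M\oplus E(M)$; projecting onto $E(M)$ forces $i(M)$ to be a summand of $E(M)$, and essentiality then gives $M=E(M)$. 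So every indecomposable non-simple right $R$-module is injective. A length-bound follows: a length-$2$ uniform submodule $N$ of an indecomposable $M$ of length $\geq 3$ would be indecomposable non-simple, hence injective, so $N=E(\Soc(N))=E(\Soc(M))=M$, contradiction. Thus every indecomposable has length $\leq 2$, and $R_R=\bigoplus e_iR$ gives $J^2=0$ together with right-seriality. For left-seriality I use that each length-$2$ projective $e_iR$ equals $E(\Soc(e_iR))$; in the basic case this makes the map ``length-$2$ index $\mapsto$ iso-class of socle'' injective, so the bimodule decomposition of $J$ yields each $Je_j$ simple or zero, whence $Re_j$ is left uniserial. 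The non-basic case reduces via Morita equivalence.

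For $(1)\Rightarrow(2)$, under (1) every right module decomposes as $M=M_s\oplus M_\ell$ with $M_s$ semisimple and $M_\ell$ a direct sum of length-$2$ uniserials that are simultaneously projective and injective. Since $R$ is right Noetherian (Hopkins--Levitzki), $M_\ell$ is injective, and $M_s$ is quasi-injective as $\Soc(E(M_s))$. The simple-direct-injective hypothesis on $M$ is equivalent to the disjointness condition that no isomorphism class of simples appears in both $M_s$ and $\Soc(M_\ell)$; under this disjointness any homomorphism $U_\alpha\to E(M_s)$ from a length-$2$ summand $U_\alpha$ of $M_\ell$ cannot be injective (the socles do not match), so it factors through a simple quotient of $U_\alpha$ landing inside $\Soc(E(M_s))=M_s$. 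Together with the obvious invariance of the other three blocks of $\End(E(M))=\End(E(M_s)\oplus M_\ell)$, this yields $\End(E(M))\cdot M\subseteq M$, so $M$ is quasi-injective.

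The main obstacle I foresee is the left-seriality step in $(3)\Rightarrow(1)$: since the hypothesis is phrased entirely on right modules, converting the length-bound plus right-seriality into a two-sided serial structure requires the bimodule analysis of $J$ together with the coincidence that length-$2$ principal indecomposables are both projective and injective. The other steps are comparatively mechanical once the structural decomposition is in hand.
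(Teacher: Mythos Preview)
Your overall strategy matches the paper's, and your $(1)\Rightarrow(2)$ via the $\End(E(M))$-invariance criterion is essentially the same as the paper's relative-injectivity check (both reduce to the disjointness of socle types between $M_s$ and $M_\ell$). Your preliminary observation that the hypotheses force $\mathcal{A}$ to be exactly the class of simple modules is correct and clarifying, though the paper never makes it explicit.

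The substantive divergence is in $(3)\Rightarrow(1)$. After showing that every indecomposable right module is quasi-injective, the paper does \emph{not} attempt to derive two-sided seriality by hand: it simply invokes Fuller's theorem \cite[Theorem~5.3]{Fuller2}, which says that over an artinian ring this condition characterizes serial rings. The length bound $J^2=0$ is then obtained separately via Lemma~\ref{lem:criter}(3): for a uniserial $M$ with a chain $0\subset X\subset Y\subset M$, one has $Y\notin\mathcal{A}$ (otherwise $X$ would be $Y$-injective and split off), so $M\oplus Y$ is $\mathcal{A}$-C3, hence C3, and the inclusion $Y\hookrightarrow M$ would split.

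Your alternative route---bound lengths first, read off right seriality and $J^2=0$, then argue left seriality from the Peirce decomposition of $J$---has a real gap at the last step. You correctly deduce that $Je_j=e_iJe_j=e_iJ$ for at most one $i$ (via injectivity of the socle-type map), but this does \emph{not} by itself make $Je_j$ simple on the left: you still need $e_iJ$ to be one-dimensional over $D_i=e_iRe_i/e_iJe_i$. This genuinely fails for right-serial artinian rings with $J^2=0$ in general (e.g.\ $R=\bigl(\begin{smallmatrix}\mathbb{R}&\mathbb{C}\\0&\mathbb{C}\end{smallmatrix}\bigr)$, where $Je_2\cong\mathbb{C}$ is $2$-dimensional over $D_1=\mathbb{R}$), so the injectivity of $e_iR$ must be used again. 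The missing argument: Baer's criterion for the inclusion $e_iJ\hookrightarrow R$ shows that the left-multiplication map $D_i\to\End_R(e_iJ)\cong D_j$ is surjective, hence an isomorphism of division rings, and since $S_j$ is one-dimensional over $D_j$ (basic case), $e_iJ$ is one-dimensional over $D_i$. Either supply this, or follow the paper and cite Fuller.
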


\begin{proof}
$(1) \Rightarrow (2)$ Assume that $R$ is an artinian serial ring with $%
J^{2}(R)=0 $. Then every right $R$-module is a direct sum of a semisimple
module and an injective module. Furthermore, every injective module is a
direct sum of cyclic uniserial modules. Let $M$ be an
$\mathcal{A}$-C3  module. We can write $M=(\oplus _{\mathcal{I}%
}S_{i})\oplus (\oplus _{\mathcal{J}}E_{j})$ where each $S_{i}$ is simple if $%
i\in \mathcal{I}$ and $\oplus _{\mathcal{J}}E_{j}$ is injective where each $%
E_{j}$ is cyclic uniserial non-simple if $j\in {J}$. Note that any $E_{j}$
has length at  2 by \cite[13.3]{DHSW}. We show that $M$ is a
quasi-injective module. To show that $M$ is quasi-injective, by \cite[%
Proposition 1.17]{MM} it suffices to show that $\oplus _{\mathcal{I}}S_{i}$ is
$\oplus _{\mathcal{J}}E_{j}$-injective. By \cite[Theorem 1.7]{MM}, $%
\oplus _{\mathcal{I}}S_{i}$ is $\oplus _{\mathcal{J}}E_{j}$-injective if
and only if $S_{i}$ is $\oplus _{\mathcal{J}}E_{j}$-injective for all $i\in
\mathcal{I}$. Furthermore, for any $i\in \mathcal{I}$, if $S_{i}$ is $E_{j}$%
-injective for all $j\in \mathcal{J}$, then $S_{i}$ is $\oplus _{\mathcal{J}%
}E_{j}$-injective by \cite[Proposition  1.5]{MM}. So, it suffices to show
that $S_{i}$ is $E_{j}$-injective for each $i\in \mathcal{I}$ and $j\in
\mathcal{J}$. Suppose that $E_j$ has a series $0\subset X\subset E_j$.
Let $f: A\to S_i$ be a homomorphism with $A\leq E_j$. If $A=0$ or $A=E_j$ then it is obvious that  $f$ is extended to a homomorphism from $E_j$ to $S_i$. Assume that $A=X$. If $f$ is non-zero, then $X\simeq S_i$.   As $M$ is an  $\mathcal{A}$-C3 module, $X$   is a direct summand of $%
M$. It follows that $X=E_{j}$, a contradiction. Hence $S_{i}$ is $E_{j}$%
-injective and so $M$ is quasi-injective.

$(2)\Rightarrow (3)$ This is clear.

$(3)\Rightarrow (1)$  Let $M$ be an indecomposable module. If $M\in \mathcal{A}$,
then it is quasi-injective. Now, suppose that $M\not\in \mathcal{A}$ and let $%
\iota :M\rightarrow E(M)$ be the inclusion. Then, by Lemma \ref{lem:criter}, $M\oplus E(M)$ is
$\mathcal{A}$-C3 and by assumption, $M\oplus E(M)$ is a $C3$-module. It
follows that $\im(\iota )$ is a direct summand of $E(M)$ by \cite[Proposition 2.3]%
{Amin}. Hence $M$ is injective. Inasmuch as every indecomposable right $R$%
-module is quasi-injective, we infer from \cite[Theorem 5.3]{Fuller2} that
$R$ is an artinian serial ring. By \cite[Theorem 25.4.2]{Faith}, every right
$R$-module is a direct sum of uniserial modules. Now, by \cite[13.3]{DHSW}, we only need to show that each uniserial module, say $M$, has length
at most 2. Suppose that $M$ has a series $0\subset X\subset Y\subset M$ of
length 3. Assume that $Y\in \mathcal{A}$. Then $X$ is $Y$-injective and hence $X$ is a direct summand of $Y$, a contradiction. It follows that $Y\not\in \mathcal{A}$. By Lemma \ref{lem:criter}, $M\oplus Y$ is an $\mathcal{A}$-C3 module and then, by hypothesis, is a C3-module. Consequently, the natural inclusion, $\eta :Y\longrightarrow M$
splits; i.e. $Y\subset_d M$ and so $Y=M,$ a contradiction. Hence, $%
R$ is an artinian ring with $J^{2}(R)=0.$
\end{proof}

\begin{thm}\label{thm:4.4q}  Let $R$ be a right artinian ring and  $\mathcal{A}$,  a class of  right $R$-modules with local endomorphisms,  containing  all right simple right $R$-modules and  closed under isomorphisms.    If all right $R$-modules are $\mathcal{A}$-projective, then the following conditions are equivalent for a ring $R$:

\begin{enumerate}
\item $R$ is a serial artinian ring with $J^{2}(R)=0$.
\item Every $\mathcal{A}$-D3  module is quasi-projective.
\item Every $\mathcal{A}$-D3  module is $D3$.
\end{enumerate}
\end{thm}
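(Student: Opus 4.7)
The plan is to dualize the proof of Theorem \ref{thm:4.2q} step by step, interchanging injectivity and projectivity throughout and using Lemma \ref{lem:criter}, Lemma \ref{lem:addd} and Proposition \ref{pro:add1} as the dual technical ingredients. The implication $(2)\Rightarrow(3)$ is immediate since every quasi-projective module is $D3$.

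For $(1)\Rightarrow(2)$, assume $R$ is artinian serial with $J^{2}(R)=0$ and let $M$ be $\mathcal{A}$-$D3$. The dual structural fact to exploit is that every right $R$-module decomposes as $A\oplus B$ with $A$ semisimple and $B$ a direct sum of indecomposable projective uniserial modules of length at most $2$; this is the projective analogue of the ``semisimple plus injective'' decomposition used in Theorem \ref{thm:4.2q} and holds because over such a ring $\mathrm{Ext}^{1}(S,T)\neq 0$ forces $T$ to be the socle of the projective cover of $S$, so every length-$2$ uniserial is in fact projective. Write $M=(\oplus_{\mathcal{I}}S_{i})\oplus(\oplus_{\mathcal{J}}P_{j})$ and use the duals of \cite[Theorem 1.7, Proposition 1.5 and Proposition 1.17]{MM} to reduce quasi-projectivity of $M$ to showing that each simple $S_{i}$ is $P_{j}$-projective. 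The only potential obstruction is when $S_{i}\cong P_{j}/\Soc(P_{j})$, since any map $S_{i}\to P_{j}$ factors through $\Soc(P_{j})$. Here the $\mathcal{A}$-$D3$ hypothesis intervenes: $S_{i}\oplus P_{j}$ is a summand of $M$ and so is $\mathcal{A}$-$D3$ by Lemma \ref{lem1}; applying Proposition \ref{pro:add1} to a nonzero surjection $P_{j}\twoheadrightarrow S_{i}$ (whose image $S_{i}\in\mathcal{A}$ is trivially a summand of itself) shows that its kernel $\Soc(P_{j})$ is a summand of $P_{j}$, contradicting indecomposability.

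For $(3)\Rightarrow(1)$, the argument proceeds in three steps paralleling Theorem \ref{thm:4.2q}. First, every indecomposable right $R$-module $M$ must be quasi-projective: if $M\in\mathcal{A}$, then $M$ is $M$-projective by hypothesis; otherwise let $\pi\colon P(M)\to M$ be the projective cover (available since $R$ is right artinian). By Lemma \ref{lem:criter}(1), $N:=M\oplus P(M)$ is $\mathcal{A}$-$D3$, hence $D3$ by $(3)$. Using Lemma \ref{lem:addd}, $N=P(M)\oplus M=\langle\pi\rangle\oplus M$ with $P(M)\cap\langle\pi\rangle=\ker(\pi)$ and $P(M)+\langle\pi\rangle=N$, so the $D3$ property forces $\ker(\pi)$ to be a summand of $N$ and hence of $P(M)$; being small in $P(M)$, this kernel must vanish, so $M\cong P(M)$ is projective. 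Second, invoke the projective analogue of Fuller's theorem (dual to \cite[Theorem 5.3]{Fuller2}) to conclude $R$ is artinian serial, and use \cite[Theorem 25.4.2]{Faith} to write every right module as a direct sum of uniserials. Third, to force $J^{2}(R)=0$ via \cite[13.3]{DHSW}, suppose some uniserial $M$ has a composition series $0\subset X\subset Y\subset M$ of length $3$; since the hypothesis makes every $N\in\mathcal{A}$ semisimple (the identity on $N/K$ lifts along $N\to N/K$, so every submodule of $N$ splits off), neither $M$ nor $M/X$ lies in $\mathcal{A}$. By Lemma \ref{lem:criter}(3), $M\oplus M/X$ is $\mathcal{A}$-$D3$ hence $D3$, and applying Lemma \ref{lem:addd} to the canonical epimorphism $M\to M/X$ forces $X$ to be a summand of $M$, contradicting its indecomposability.

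The main obstacles are: justifying the ``semisimple plus projective'' decomposition of arbitrary right modules over an artinian serial ring with $J^{2}=0$ used in $(1)\Rightarrow(2)$; verifying the duals of the \cite{MM} results on mutual projectivity, which may need direct argument for the possibly infinite sums appearing in $M$; and citing or supplying the projective counterpart of Fuller's theorem needed for the second step of $(3)\Rightarrow(1)$.
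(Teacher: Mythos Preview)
Your dualization is essentially correct, and the three implications go through as you sketch them. However, the paper's own proof takes a much shorter route: it is literally the one-line citation ``By Lemma~\ref{lem:criter} and \cite[Theorem 4.4]{IKQY}.'' The point is that since $\mathcal{A}$ contains all simple modules, every $\mathcal{A}$-D3 module is in particular simple-direct-projective, so $(1)\Rightarrow(2)$ follows immediately from the simple-direct-projective version in \cite{IKQY}; for $(3)\Rightarrow(1)$ one uses Lemma~\ref{lem:criter} to produce the test modules $M\oplus P(M)$ and $M\oplus M/X$ exactly as you do, or alternatively one observes (as you do in your Step~3) that the hypothesis ``every module is $\mathcal{A}$-projective'' together with ``$\End(N)$ is local for $N\in\mathcal{A}$'' forces every $N\in\mathcal{A}$ to be simple, so that $\mathcal{A}$-D3 \emph{coincides} with simple-direct-projective and the whole theorem collapses to \cite[Theorem~4.4]{IKQY}.

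What your approach buys is self-containment: you re-derive the serial-artinian structure and the bound $J^2(R)=0$ from scratch rather than importing them from \cite{IKQY}. The price is the list of obstacles you flag---the ``semisimple plus projective'' decomposition, the relative-projectivity analogues of \cite[1.5, 1.7, 1.17]{MM}, and the projective counterpart of Fuller's theorem. All of these are true over an artinian serial ring with $J^2=0$ (for instance, any length-2 uniserial is a quotient of its indecomposable projective cover, which itself has length at most~2, hence equals it), but each requires its own justification or citation. The paper sidesteps all of this by leaning on the already-published case $\mathcal{A}=\{\text{simples}\}$. In particular, you could shorten your write-up considerably by moving the observation ``$\mathcal{A}$ is forced to consist of simple modules'' to the very beginning.
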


\begin{proof}By Lemma \ref{lem:criter} and  \cite[Theorem 4.4]{IKQY}.
\end{proof}

\begin{prop}  Let $\mathcal{A}$ be a class of right $R$-modules and  closed under isomorphisms and summands. Then the following conditions are equivalent:
\begin{enumerate}
\item  All modules  $A\in  \mathcal{A}$ are injective.
\item  Every right $R$-module   is  $\mathcal{A}$-C3.
\end{enumerate}
\end{prop}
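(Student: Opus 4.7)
The plan is to split the proof into two directions. The forward direction $(1) \Rightarrow (2)$ should be immediate: given direct summands $A, B \in \mathcal{A}$ of a module $M$ with $A \cap B = 0$, the sum $A \oplus B$ is a finite direct sum of injectives (by hypothesis), hence injective, hence a direct summand of $M$. This uses only the standard fact that a finite direct sum of injective modules is injective, together with the fact that an injective submodule of any module is a direct summand.

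For $(2) \Rightarrow (1)$, I would use the standard characterization: a module $A$ is injective if and only if it is a direct summand of every module containing it as a submodule. So fix $A \in \mathcal{A}$ and an embedding $\iota: A \hookrightarrow B$, and form the external direct sum $M = A \oplus B$. The two key submodules to consider are
$$N_1 = A \oplus 0 \quad \text{and} \quad N_2 = \langle \iota \rangle = \{(a, \iota(a)) : a \in A\}.$$
Both are direct summands of $M$: $N_1$ has complement $0 \oplus B$, and $N_2$ has the same complement $0 \oplus B$ by the splitting $(x,y) = (x,\iota(x)) + (0, y - \iota(x))$ (this is essentially Lemma \ref{lem:addd} of the paper). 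Both are isomorphic to $A$, hence in $\mathcal{A}$ by closure under isomorphisms, and $N_1 \cap N_2 = 0$ because $\iota$ is injective.

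By hypothesis (2), $M$ is $\mathcal{A}$-C3, so $N_1 + N_2 \subset_d M$. A direct computation gives $N_1 + N_2 = A \oplus \iota(A)$ as a submodule of $A \oplus B$. Writing $M = (N_1 + N_2) \oplus K$ and quotienting by $N_1 \subseteq N_1 + N_2$, the natural isomorphism $M/N_1 \cong B$ carries $(N_1 + N_2)/N_1$ onto $\iota(A)$ and carries $(K + N_1)/N_1$ onto some complement; using the modular law, $(N_1 + N_2) \cap (K + N_1) = N_1 + ((N_1 + N_2) \cap K) = N_1$, so the sum in the quotient is direct. Thus $B = \iota(A) \oplus \overline{K}$, showing $\iota(A)$ is a direct summand of $B$, so $A$ is injective.

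The only place any real care is needed is the quotient step — keeping track of which pieces land where after dividing by $N_1$, and verifying via the modular law that the direct sum decomposition of $M$ descends cleanly to a direct sum decomposition of $B$. Everything else is a matter of setting up the graph submodule $\langle \iota \rangle$ and invoking closure of $\mathcal{A}$ under isomorphisms so that both $N_1$ and $N_2$ are legitimate witnesses in the $\mathcal{A}$-C3 hypothesis.
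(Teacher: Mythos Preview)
Your proof is correct and follows essentially the same idea as the paper: form the external direct sum $A\oplus B$, use the graph $\langle\iota\rangle$ as a second copy of $A$ meeting the first trivially, and invoke the $\mathcal{A}$-C3 condition. The only differences are cosmetic: the paper takes $B=E(A)$ specifically and packages the ``image is a summand'' step into Proposition~\ref{pro:ad2}, whereas you allow an arbitrary overmodule $B$ and carry out the quotient/modular-law computation by hand. Both routes are short; the paper's choice of $B=E(A)$ avoids the quotient step entirely since $A\subset_d E(A)$ immediately forces $A=E(A)$.
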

\begin{proof} $(1)\Rightarrow (2)$ is obvious.
\vskip 0.1cm
$(2)\Rightarrow (1)$.  Suppose that  $A\in  \mathcal{A}$. Then by (2),  $A\oplus E(A)$ is an $\mathcal{A}$-C3 module. Call $\iota: A\to E(A)$ the inclusion map.  By Proposition \ref{pro:ad2}, $\im(\iota)=A$ is a direct summand of $E(A)$. Thus $A=E(A)$ is an injective module.
\end{proof}
\begin{cor}[\cite{CIYZ}] The following conditions are equivalent for a ring $R$:
\begin{enumerate}
\item $R$ is a right V-ring.
\item Every right $R$-module is simple-direct-injective.
\end{enumerate}
\end{cor}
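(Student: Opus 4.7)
The Corollary should fall out as an immediate specialization of the preceding Proposition. The plan is to choose $\mathcal{A}$ to be the class of all simple right $R$-modules (enlarged by the zero module if one insists on closure under summands in the strict sense), verify the two hypotheses on $\mathcal{A}$, and then translate the conclusion via the dictionary built up in Section~2.

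First I would check that this $\mathcal{A}$ satisfies the hypotheses of the Proposition. Closure under isomorphism is immediate. For closure under summands, any direct summand of a simple module is either $0$ or the module itself, so the class is closed under summands once $0$ is adjoined. Next, I would recall Remark~2.1(3), which observes that when $\mathcal{A}$ is the class of simple submodules of $M$, the notion of $\mathcal{A}$-C3 coincides exactly with the notion of simple-direct-injective introduced in \cite{CIYZ}. Hence condition~(2) of the Corollary is precisely the instance of condition~(2) of the Proposition for this choice of $\mathcal{A}$.

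For the other side, I would invoke the standard characterization of right V-rings: $R$ is a right V-ring if and only if every simple right $R$-module is injective. Since every $A \in \mathcal{A}$ is either zero or simple, this is exactly the statement that every $A \in \mathcal{A}$ is injective, i.e.\ condition~(1) of the Proposition. Applying the Proposition then yields (1)$\Leftrightarrow$(2) of the Corollary.

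There is really no substantive obstacle here; the only point that deserves any care is confirming that the class of simple modules (together with $0$) genuinely satisfies the closure hypothesis invoked in the Proposition, so that one is allowed to cite it. Everything else is a matter of translating notation.
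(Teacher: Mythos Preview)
Your proposal is correct and is precisely the intended derivation: the paper states this as an immediate corollary of the preceding Proposition, with no separate proof, so specializing to $\mathcal{A}$ equal to the class of simple modules (together with $0$) and invoking the standard characterization of right V-rings is exactly what is meant. Your care about closure under summands is appropriate but, as you note, trivial here.
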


\begin{prop}  Let $\mathcal{A}$ be a class of right $R$-modules and  closed under isomorphisms and summands. Then the following conditions are equivalent:
\begin{enumerate}
\item  All modules  $A\in  \mathcal{A}$ are projective.
\item  Every right $R$-module   is  $\mathcal{A}$-D3.
\end{enumerate}
\end{prop}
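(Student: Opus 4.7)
The plan is to mirror the proof of the preceding injective/C3-analog, using Proposition \ref{pro:add1} in place of Proposition \ref{pro:ad2}.

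For the direction $(1)\Rightarrow(2)$, suppose every $A\in\mathcal{A}$ is projective, and let $M$ be any right $R$-module with direct summands $M_1,M_2$ such that $M/M_1, M/M_2\in\mathcal{A}$ and $M=M_1+M_2$. The second isomorphism theorem gives $M/M_1\cong M_2/(M_1\cap M_2)$. Since $M/M_1$ is projective, the natural epimorphism $M_2\twoheadrightarrow M_2/(M_1\cap M_2)$ splits, so $M_1\cap M_2$ is a direct summand of $M_2$; because $M_2\subset_d M$, it is also a direct summand of $M$. Hence $M$ is $\mathcal{A}$-D3.

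For the direction $(2)\Rightarrow(1)$, let $A\in\mathcal{A}$. Choose any projective module $P$ (e.g.\ a free module) together with an epimorphism $\pi\colon P\to A$. Form $M=P\oplus A$. By hypothesis $(2)$, $M$ is an $\mathcal{A}$-D3 module. Apply Proposition \ref{pro:add1} to the decomposition $M=M_1\oplus M_2$ with $M_1=P$, $M_2=A$, and the homomorphism $\pi\colon P\to A$: here $\mathrm{Im}(\pi)=A=M_2$ is trivially a direct summand of $M_2$ and lies in $\mathcal{A}$, so $\mathrm{Ker}(\pi)$ is a direct summand of $P$. Consequently $\pi$ splits, which exhibits $A$ as a direct summand of the projective module $P$; therefore $A$ is projective.

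No step looks genuinely delicate: the nontrivial direction is $(2)\Rightarrow(1)$, but the key lemma (Proposition \ref{pro:add1}) has already been established in the paper, and the only small thing to verify is that the class $\mathcal{A}$ is closed under isomorphisms and summands so that the hypothesis of Proposition \ref{pro:add1} applies — this is exactly what is assumed in the statement.
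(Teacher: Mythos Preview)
Your proof is correct and follows essentially the same approach as the paper's. For $(2)\Rightarrow(1)$ the paper likewise applies Proposition~\ref{pro:add1} to $R^{(I)}\oplus A$ with an epimorphism $R^{(I)}\to A$, and for $(1)\Rightarrow(2)$ the paper uses the Chinese-remainder isomorphism $M/(M_1\cap M_2)\cong M/M_1\times M/M_2$ instead of your second-isomorphism-theorem splitting, but this is only a cosmetic variation.
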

\begin{proof} $(1)\Rightarrow (2)$. Assume that $M$ is a right $R$-module.  Let    $M_1, M_2$ be submodules of $M$ with  $M/M_1, M/M_2\in \mathcal{A}$ and $M=M_1+M_2$. It follows that $M/M_1, M/M_2N$ are projective modules and the following isomorphism   $$M/(M_1\cap M_2)=(M_1+M_2)/(M_1\cap M_2)\simeq M/M_1\times  M/M_2.$$

 Then $M/(M_1\cap M_2)$ is a projective module.  We deduce that  $M_1\cap M_2$ is a direct summand of $M$. It shown that  $M$ is  an $\mathcal{A}$-D3 module.

$(2)\Rightarrow (1)$.  Suppose that  $A\in  \mathcal{A}$. Call $\varphi: R^{(I)}\to A$ an epimorphism.  Then  $R^{(I)}\oplus A $ is an $\mathcal{A}$-D3 module. By Proposition \ref{pro:add1},  $A$ is isomorphic to a direct summand of $R^{(I)}$. Thus $A$ is a projective module.
\end{proof}
\begin{cor}[\cite{IKQY}] The following conditions are equivalent for a ring $R$:
\begin{enumerate}
\item $R$ is a semisimple artinian ring.
\item Every right $R$-module is simple-direct-projective.
\end{enumerate}
\end{cor}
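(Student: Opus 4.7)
The plan is to apply the preceding Proposition with $\mathcal{A}$ taken to be the class of all simple right $R$-modules (together with the zero module, so as to be closed under summands). Since isomorphic copies of a simple module are simple and the only summands of a simple are $0$ and itself, $\mathcal{A}$ is closed under isomorphisms and direct summands, and by Remark 2.1(3) the $\mathcal{A}$-D3 modules are precisely the simple-direct-projective modules. The Proposition therefore reduces the Corollary to proving that $R$ is semisimple artinian if and only if every simple right $R$-module is projective.

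The forward implication is immediate, since over a semisimple artinian ring every right module, and in particular every simple, is projective. For the converse, I would suppose every simple right $R$-module is projective and show that $\Soc(R_R)=R_R$. If not, then $R/\Soc(R_R)$ is a nonzero cyclic module and so possesses a maximal submodule by the standard Zorn's-lemma argument for cyclic modules. Pulling this maximal submodule back to $R$ yields a maximal right ideal $K$ with $K\supseteq \Soc(R_R)$ and simple quotient $S:=R/K$. By hypothesis $S$ is projective, so the canonical projection $R\to S$ splits and gives $R=K\oplus S'$ with $S'\cong S$. But then $S'$ is a simple submodule of $R_R$, hence $S'\subseteq \Soc(R_R)\subseteq K$, forcing $S'=S'\cap K=0$ and contradicting $S'\cong S\ne 0$. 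Therefore $R_R$ is semisimple, and since $R_R$ is cyclic it decomposes as a finite sum of simples, so $R$ is semisimple artinian.

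The main obstacle is bridging the gap between the Proposition's output (projectivity of every simple right $R$-module) and the stronger conclusion that $R$ is semisimple artinian; the Proposition alone does not deliver this. The socle-quotient maneuver above supplies the missing step cleanly, relying on nothing beyond the existence of maximal submodules in nonzero cyclic modules and the splitting of epimorphisms onto projectives.
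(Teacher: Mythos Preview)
Your proposal is correct and follows the paper's intended approach: the paper states this as an immediate corollary of the preceding Proposition (with $\mathcal{A}$ the class of simple right $R$-modules), relying implicitly on the well-known fact that $R$ is semisimple artinian if and only if every simple right $R$-module is projective. You have simply supplied a clean proof of that classical equivalence via the socle argument, which the paper leaves to the reference \cite{IKQY}.
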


Let $M$ be a right $R$-module.  $M$ is called \emph{regular} if every cyclic submodule of $M$ is a direct summand. A right $R$-module is called \emph{$M$-cyclic} if it is isomorphic to a factor module of $M$.

\begin{lem}\label{lem:vmodule}Let $F$ be a regular module. Assume that  $A\not=0$ is a small finitely generated submodule of the factor module $F/F_0$ for some submodule $F_0$ of $F$ and $\mathcal {A}$ the class of all modules isomorphism   to $A$. Then there exists a $F$-cyclic module $M$ and satisfies the property: there is  a  submodule $N \in \mathcal {A}$ of $M$  that is isomorphic to a direct summand of $M$ and not  a direct summand.
\end{lem}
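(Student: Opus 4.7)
The plan is to construct an $F$-cyclic module $M$ isomorphic to $A \oplus F/F_0$; the first summand will provide a direct summand of $M$ isomorphic to $A$, while the embedding $A \hookrightarrow F/F_0$ (sitting inside the second summand) will give a copy $N \cong A$ in $M$ which fails to be a direct summand precisely because $A$ is small in $F/F_0$.

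The first step is to lift $A$ to a finitely generated submodule of $F$ that is a direct summand. Writing $A = a_1 R + \cdots + a_n R$ with $a_i = \pi(x_i)$ for some $x_i \in F$, where $\pi \colon F \to F/F_0$ is the canonical projection, set $\tilde A = x_1 R + \cdots + x_n R$. A routine induction on the number of generators shows that every finitely generated submodule of a regular module is a direct summand: the inductive step uses that a direct summand of a regular module is again regular (by the modular law), which allows the new cyclic generator to be split off inside the complementary summand. Hence $F = \tilde A \oplus \tilde A'$ for some $\tilde A' \leq F$.

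Next I would build $M$. Set $K = \tilde A \cap F_0$, $K' = \tilde A' \cap F_0$ and $L = K + K'$. Since $K \cap K' \subseteq \tilde A \cap \tilde A' = 0$ this sum is direct, so $F/L \cong \tilde A/K \oplus \tilde A'/K'$. By the second isomorphism theorem $\tilde A/K \cong (\tilde A + F_0)/F_0 = A$. Since $A$ is small in $F/F_0$ and $A + (\tilde A' + F_0)/F_0 = (\tilde A + \tilde A' + F_0)/F_0 = F/F_0$, we deduce $\tilde A' + F_0 = F$, whence $\tilde A'/K' \cong F/F_0$. Thus $M := F/L$ is $F$-cyclic and $M \cong A \oplus F/F_0$.

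Finally I would identify $N$. The first summand $A \oplus 0 \subset M$ is a direct summand isomorphic to $A$. Let $N \subseteq M$ be the submodule corresponding to $0 \oplus A \subseteq 0 \oplus F/F_0$; then $N \in \mathcal{A}$ and $N$ is isomorphic to the first summand. If $N$ were a direct summand of $M$, then any retraction $r \colon M \to N$ would restrict on the second factor to a retraction $F/F_0 \to N$ of the inclusion $A \hookrightarrow F/F_0$, making $A$ a direct summand of $F/F_0$, which contradicts the hypothesis that $A$ is a nonzero small submodule of $F/F_0$. The main obstacle I anticipate is the preliminary claim of the first step: the paper's definition of regularity concerns only cyclic submodules, so the upgrade to finitely generated submodules must be justified explicitly; once that is in hand, the rest of the argument is bookkeeping with the decomposition $F = \tilde A \oplus \tilde A'$.
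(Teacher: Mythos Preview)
Your proposal is correct and follows essentially the same route as the paper: both arguments lift $A$ to a finitely generated direct summand $\tilde A$ of $F$ (the paper writes this as $\pi(F)$ for an idempotent $\pi\in\End(F)$), use smallness of $A$ to see that the complement $\tilde A'$ still surjects onto $F/F_0$, and conclude that $M\cong A\oplus F/F_0$ is $F$-cyclic with $N=0\oplus A$ the desired non-summand copy of $A$. Your write-up is in fact more careful than the paper's on two points---the inductive upgrade from cyclic to finitely generated summands, and the reason $N$ cannot split off (the paper simply asserts both)---so nothing is missing.
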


\begin{proof} By the hypothesis we have  $((x_1R+x_2R+ \cdots+ x_mR) + F_0)/F_0=A$ for some $x_1,x_2,\dots,x_m$ of $F$.  Since $F$ is a regular module,   $x_1R+x_2R+ \cdots+ x_mR= \pi (F)$, where $\pi \in \End(F)$ and $\pi^2 = \pi$. Since $A$ is a small submodule of $F/F_0$, we have $F / F_0 = ((1- \pi) F + F_0)/F_0$. It follows that  there exist epimorphisms $f_1: \pi (F) \rightarrow A,$ $f_2: (1- \pi) (F) \rightarrow F / F_0$. It is easy to check   $A \oplus (F / F_0)$ is an $F$-cyclic module. Call $M=A \oplus (F / F_0)$.  Thus, the module  $N:=0 \oplus A\simeq A$  is  not a direct summand of $M$ and   isomorphic to a direct summand of $M$.
\end{proof}
A module $M$ is called a {\it V-module}   if every simple module in $\sigma[M]$
 is $M$-injective (see \cite{W}).  $R$ is called a  right {\it V-ring}  if the right module $R_R$ is a V-module.
\begin{thm}\label{thm:4.10a}  The following conditions are equivalent for a regular module $F$:
\begin{enumerate}
\item $F$ is a $V$-module.
\item Every $F$-cyclic module $M$  is an $\mathcal{A}$-C3 module where $\mathcal{A}$ is the class of all simple submodules of $M$.
\end{enumerate}
\end{thm}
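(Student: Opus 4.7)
The plan is to prove the equivalence by identifying an $\mathcal{A}$-C3 module (with $\mathcal{A}$ the class of all simple submodules of $M$) with a simple-direct-injective module, as observed in Remark at the start of Section~2, and then exploiting the characterizations (simple summand with isomorphic simple submodule) already established.

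For the forward direction (1)$\Rightarrow$(2), let $M$ be an $F$-cyclic module, so $M \in \sigma[F]$, and let $A, B$ be simple direct summands of $M$ with $A \cap B = 0$. I would write $M = A \oplus M'$ and consider the projection $\pi : M \to M'$. Since $B$ is simple and $B \cap A = 0$, the restriction $\pi|_B$ is a monomorphism, so $\pi(B)$ is a simple submodule of $M'$. Since $F$ is a $V$-module, every simple module in $\sigma[F]$ is $F$-injective, hence injective in $\sigma[F]$, hence $M'$-injective because $M' \in \sigma[F]$. Therefore $\pi(B)$ splits off $M'$, giving a decomposition $M' = \pi(B) \oplus M''$. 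The identity $b = (b - \pi(b)) + \pi(b)$ with $b - \pi(b) \in A$ shows $A + B = A + \pi(B) = A \oplus \pi(B)$ (the sum is direct because $A \cap M' = 0$), and this is a direct summand of $M = A \oplus \pi(B) \oplus M''$.

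For the backward direction (2)$\Rightarrow$(1), I would argue by contrapositive. Assume $F$ is not a $V$-module, so some simple $S \in \sigma[F]$ fails to be $F$-injective: there exist a submodule $K \leq F$ and a nonzero homomorphism $f : K \to S$ with no extension to $F$. As $S$ is simple, $f$ is surjective; set $K_0 = \Ker f$, giving $K/K_0 \cong S$ as a simple submodule of $F/K_0$. I would then show $S = K/K_0$ is not a direct summand of $F/K_0$: otherwise a decomposition $F/K_0 = (K/K_0) \oplus (L/K_0)$ yields $F = K + L$ and $K \cap L = K_0$, whence the rule $\tilde f(k+l) := f(k)$ is well-defined (any ambiguity lies in $K \cap L = K_0 = \Ker f$) and extends $f$ to $F$, a contradiction. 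Being a nonzero simple (in particular finitely generated) non-summand submodule of $F/K_0$, $S$ is small there. Applying Lemma~\ref{lem:vmodule} with $F_0 = K_0$ and $A = S$ produces an $F$-cyclic module $M$ containing a simple submodule $N \cong S$ that is isomorphic to a direct summand of $M$ but not itself a direct summand, contradicting the simple-direct-injectivity of $M$ via the equivalent form recorded in the corollary citing \cite[Proposition~2.1]{CIYZ}.

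The main obstacle is the backward direction, and specifically the passage from the abstract failure of the $V$-module property to a concrete realization of a simple $S$ as a small submodule of a cyclic-quotient-style factor $F/F_0$, which is exactly the hypothesis required by Lemma~\ref{lem:vmodule}. Without the well-definedness computation producing the extension $\tilde f$, one cannot rule out the possibility that $S = K/K_0$ splits off $F/K_0$, in which case $S$ would not be small and the lemma would be inapplicable; the virtue of choosing $F_0 = \Ker f$ is precisely that it encodes the obstruction to extending $f$ as the non-splitting of $S$ in $F/F_0$.
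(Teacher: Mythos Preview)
Your proof is correct and shares the paper's overall architecture: in the backward direction both arguments manufacture a simple small submodule of some quotient $F/F_{0}$ and then invoke Lemma~\ref{lem:vmodule} to obtain an $F$-cyclic module violating simple-direct-injectivity. The difference lies in how that small simple is produced. The paper passes to the $\sigma[F]$-injective hull $E_{F}(S)$, uses that it is $F$-generated to find $f:F\to E_{F}(S)$ with $S\subsetneq f(F)$, and then reads off that $S$ is essential (hence, being simple, small) in $f(F)\cong F/\Ker f$. You instead work directly from the failure of $F$-injectivity: a non-extendable $f:K\to S$ with $K\le F$ gives $K/\Ker f\cong S$ inside $F/\Ker f$, and your well-definedness computation for $\tilde f$ shows this copy of $S$ cannot split off, hence is small. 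Your route is slightly more elementary in that it avoids the injective hull in $\sigma[F]$ and the fact that it is $F$-generated; the paper's route makes smallness immediate via essentiality. After this point the two proofs coincide (you cite the corollary of \cite[Proposition~2.1]{CIYZ}, the paper cites Proposition~\ref{4.6}). For $(1)\Rightarrow(2)$ the paper simply says ``obvious''; your explicit argument is fine.
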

\begin{proof} The implication $(1) \Rightarrow (2)$ is obvious.

$(2) \Rightarrow (1)$. Let $S \in \sigma [F]$ is a simple module and $E_F (S)$ is the injective hull of   $S$ in the category $\sigma [F]$. Assume that $E_F (S) \neq S$. As  $E_F(S)$ is generated by   $F$, there exists a homomorphism $f: F \rightarrow E_F (S)$ such that $f (F) \neq S$. Then $S$ is a small submodule of $f (F)\simeq F/\Ker(f)$. Call $\mathcal {A}$ the class of all modules isomorphism  to $S$. By Lemma \ref{lem:vmodule},  there exists a  $F$-cyclic module $M$ and  satisfies the property: there is  a  submodule $N \in \mathcal {A}$ of $M$  that is isomorphic to a direct summand of $M$ and  not  a direct summand. We infer from Proposition \ref{4.6}  that  $M$ is not  an $\mathcal{A}$-C3 module. This contradicts the condition of (2).
\end{proof}

\begin{cor}[{\cite[Theorem 4.4.]{CIYZ}}] A regular ring R is a right V-ring if and only if every cyclic right R-module is simple-direct-injective.
\end{cor}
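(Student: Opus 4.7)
The plan is to obtain this corollary as a direct specialization of Theorem \ref{thm:4.10a} with $F = R_R$. I would first verify that each of the four notions appearing in the corollary corresponds precisely to the module-theoretic notion used in the theorem when $F$ is taken to be the right regular module.

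The first step is to match the regularity hypotheses: a ring $R$ is von Neumann regular if and only if every cyclic right ideal is a direct summand of $R_R$, which is exactly the statement that $R_R$ is a regular module in the sense defined just before Lemma \ref{lem:vmodule}. Similarly, $R$ is a right V-ring if and only if every simple right $R$-module is injective, and since the category $\sigma[R_R]$ coincides with $\ModR$, this is equivalent to $R_R$ being a V-module in the sense recalled just before Theorem \ref{thm:4.10a}. The second step is to match the cyclic-module notion: an $R_R$-cyclic module is, by definition, a factor of $R_R$, which is precisely a cyclic right $R$-module. The third step is to match the injectivity-type property: by part (3) of the Remark at the start of Section 2, when $\mathcal{A}$ is the class of all simple submodules of $M$, the $\mathcal{A}$-C3 condition is exactly the simple-direct-injective condition of \cite{CIYZ}.

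With these identifications in hand, the corollary reduces to reading off Theorem \ref{thm:4.10a} with $F = R_R$: the equivalence ``$F$ is a V-module $\Longleftrightarrow$ every $F$-cyclic module $M$ is $\mathcal{A}$-C3 (with $\mathcal{A}$ the simple submodules of $M$)'' becomes ``$R$ is a right V-ring $\Longleftrightarrow$ every cyclic right $R$-module is simple-direct-injective,'' under the hypothesis that $R$ is regular. Since there are essentially no computations left, the write-up should simply collect the three dictionary items above and invoke the theorem; there is no serious obstacle beyond noting the $\sigma[R_R] = \ModR$ identification that makes the V-ring and V-module formulations coincide.
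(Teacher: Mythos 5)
Your proposal is correct and is exactly how the paper obtains this corollary: it is the specialization of Theorem \ref{thm:4.10a} to $F = R_R$, using the identifications you list (regular ring $\Leftrightarrow$ $R_R$ regular module, right V-ring $\Leftrightarrow$ $R_R$ a V-module via $\sigma[R_R]=\ModR$, $R_R$-cyclic $\Leftrightarrow$ cyclic, and $\mathcal{A}$-C3 for simple submodules $\Leftrightarrow$ simple-direct-injective). The paper leaves this dictionary implicit, so your write-up if anything makes the deduction more explicit than the original.
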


\bibliographystyle{amsplain}

\end{document}